\newcommand{\cn}{\textit{\textsf{CN}}} 
\newcommand{\vr}{\textit{\textsf{VR}}} 
\newcommand{\ex}{\textit{\textsf{EX}}} 
\newcommand{\dv}{\textit{\textsf{DV}}} 
\newcommand{\vh}{\textit{\textsf{VH}}} 
\newcommand{\tc}{\textit{\textsf{TC}}} 
\newcommand{\vt}{\textit{\textsf{VT}}} 
\newcommand{\vl}{\textit{\textsf{VL}}} 
\newcommand{\uv}{\textit{\textsf{U}}} 
\newcommand{\sa}{\textit{\textsf{SA}}} 
\newcommand{\st}{\textit{\textsf{ST}}} 
\newcommand{\ang}[1]{\langle#1\rangle}
\newcommand{\vph}{\varphi}
\newcommand{\type}{{\rm Type}}
\newcommand{\syn}{{\rm Syn}}
\newcommand{\expr}{{\rm Expr}}
\newcommand{\fresh}{\mathbin{\#}}
\newcommand{\bool}{\textsf{Bool}}
\newcommand{\tT}{\texttt{T}}
\newcommand{\tF}{\texttt{F}}
\newcommand{\tM}{\texttt{M}}
\newcommand{\tI}{\texttt{I}}
\newcommand{\tU}{\texttt{U}}
\newcommand{\ep}{\mathbin{\varepsilon}}
\newcommand{\pt}[1]{\pi_\texttt{#1}}
\newcommand{\ab}{\allowbreak}
\newcommand{\pow}[1]{\mathscr{P}(#1)}
\begin{document}

\mainmatter  

\title{Models for Metamath}


\author{Mario Carneiro}
%

\institute{The Ohio State University, Columbus OH, USA}

\maketitle

\begin{abstract}
Although some work has been done on the metamathematics of Metamath, there has not been a clear definition of a model for a Metamath formal system.  We define the collection of models of an arbitrary Metamath formal system, both for tree-based and string-based representations.  This definition is demonstrated with examples for propositional calculus, \textsf{ZFC} set theory with classes, and Hofstadter's MIU system, with applications for proving that statements are not provable, showing consistency of the main Metamath database (assuming \textsf{ZFC} has a model), developing new independence proofs, and proving a form of G\"{o}del's completeness theorem.
\keywords{Metamath $\cdot$ Model theory $\cdot$ formal proof $\cdot$ consistency $\cdot$ ZFC $\cdot$ Mathematical logic}
\end{abstract}

\section{Introduction}\label{sec:intro}

Metamath is a proof language, developed in 1992, on the principle of minimizing the foundational logic to as little as possible \cite{metamath}. An expression in Metamath is a string of constants and variables headed by a constant called the expression's ``typecode''. The variables are typed and can be substituted for expressions with the same typecode.  See \autoref{sec:recap} for a precise definition of a formal system, which mirrors the specification of the \texttt{.mm} file format itself.

The logic on which Metamath is based was originally defined by Tarski in \cite{tarski}. Notably, this involves a notion of ``direct'' or ``non-capturing'' substitution, which means that no $\alpha$-renaming occurs during a substitution for a variable. Instead, this is replaced by a ``distinct variable'' condition saying that certain substitutions are not valid if they contain a certain variable (regardless of whether the variable is free or not---Metamath doesn't know what a free variable is). For instance, the expression $\forall x\,\vph$ contains a variable $\vph$ inside a binding expression ``$\forall x\,\square$''. (Metamath also does not have a concept of ``binding expression'', but it is safe to say that under a usual interpretation this would be considered a binding expression.) If there is a distinct variable condition between $x$ and $\vph$, then the substitution $\vph\mapsto x=y$ is invalid, because $x$ is present in the substitution to $\vph$. This is stricter than the usual first-order logic statement ``$x$ is not free in $\vph$'', because $\vph\mapsto \forall x\,x=y$ is also invalid. If there is no such distinct variable condition between $x$ and $\vph$, these substitutions would be allowed, and applying them to $\forall x\,\vph$ would result in $\forall x\,x=y$ and  $\forall x\forall x\,x=y$, respectively.  

In this paper, we will develop a definition for models of Metamath-style formal systems, which will operate by associating a function to each syntactical construct according to its type. For example, the forall symbol is defined by the axiom ``$\textrm{wff}\ \forall x\,\vph$'', which is to say it takes as input a set variable and a wff variable, and produces a wff expression. This construct is associated to an interpretation function $\pi_\forall:\uv_\textrm{set}\times\uv_\textrm{wff}\to\uv_\textrm{wff}$, where $\uv_\textrm{set}$ is the universe of set variables and $\uv_\textrm{wff}$ is the universe of wff variables, which are each provided as part of the definition of a model.

Note the difference from the usual signature of the forall, $\pi'_\forall:(M\to\bool)\to\bool$, which maps functions from the model universe $M$ to boolean values, to a boolean value. In order to make our definition work, we need the set $\uv_\textrm{wff}$ to be more complicated than just $\bool$. Instead, it is effectively $(V\to M)\to\bool$, that is, a function from assignments of variables to elements of the model, to a boolean value. In other words, a wff can be thought of as an infinite-place predicate $\vph(v_0,v_1,v_2,\dots)$ (although the value can only depend on finitely many of the provided variables).

\subsection{Grammars and trees}\label{sec:introgrammar}
Unfortunately, although it is possible to define what it means to be a model for any Metamath formal system, we can't quite reduce it to a collection of interpretation functions, like it is normally done, without a way to parse the strings which are used in the proof. This leads to the idea of grammatical parsing, which we take up in earnest in \autoref{sec:grammar}. By separating all axioms into ``syntax axioms'' and ``logical axioms'', we can find an isomorphism to a representation of statements as trees, with syntax axioms forming the nodes of the tree. Most interesting Metamath systems are grammatical, but for example Hofstadter's MIU system \cite{hofstadter}, formalized in Metamath as \texttt{miu.mm}, is a valid formal system which is not grammatical (see \autoref{sec:miu}).

The main work is presented in \autoref{sec:fdef}. A short recap of Metamath's formalism as it will be used in this work is in \autoref{sec:recap}, followed by the definition of a model in \autoref{sec:model}. Then we define the subset of ``grammatical'' formal systems, which are those for which parsing is possible, in \autoref{sec:grammar}, and rebuild the theory for a tree representation of formal systems in \autoref{sec:trees}. The model theory of tree formal systems is developed in \autoref{sec:treemodel}. A selection of examples is provided in \autoref{sec:ex}, and in particular we prove that Metamath's \textsf{ZFC} formalization, \texttt{set.mm}, has a model in \autoref{thm:setmodel}. Some applications of model theory are developed in \autoref{sec:application}, finishing with a proof of G\"{o}del's completeness theorem in \autoref{sec:godel}.

\section{Formal definition}\label{sec:fdef}
\subsection{Metamath recap}\label{sec:recap}
We recall the definitions from Appendix C of the Metamath book \cite{metamath}, but with a slight modification for a global type function.

\begin{enumerate}
  \item Let $\cn,\vr$ be disjoint sets, called the set of {\em constants} and {\em variables} respectively.
  \item Let $\type:\vr\to\cn$ be a function, understood to map a variable to its typecode constant.
  \item Let $\vt=\{\type(v)\mid v\in\vr\}$ be the set of typecodes of variables.
  \item $\ex=\{e\in\bigcup_{n\in\omega}{}^n(\cn\cup \vr)\mid(|e|>0 \wedge e_0\in\cn)\}$ (the set of expressions), 
  \item $\dv=\{x\subseteq\vr\mid|x|=2\}$ (the set of distinct variable specifications), and
  \item ${\cal V}(e)=\vr\cap\{e_n\mid 0\le n<|e|\}$ (the set of variables in an expression).
  \item We also write $\type(e)=e_0$ for $e\in\ex$.
  \item\label{def:subst} A {\em substitution} is a function $\sigma:\ex\to\ex$ such that $\sigma(\ang{c})=\ang{c}$ for $c\in\cn$ and $\sigma(gh)=\sigma(g)\sigma(h)$, where adjacency denotes concatenation of sequences. (Such a function is determined by its values on $\{\ang{v}\mid v\in\vr\}$.)
  \item Define $\vh_v=\ang{\type(v), v}$, for $v\in\vr$ (a {\em variable hypothesis}).
  \item A {\em pre-statement} is a tuple $\ang{D,H,A}$ where $D\subseteq\dv$, $H\subseteq\ex$ is finite, and $A\in\ex$.
  \item The {\em reduct} of $\ang{D,H,A}$ is $\ang{D_M,H,A}$ where $D_M=D\cap\pow{{\cal V}(H\cup\{A\})}$, and a statement is defined as the reduct of some pre-statement.
  \item A {\em formal system} is a tuple $\ang{\cn,\vr,\type,\Gamma}$ where $\cn,\vr,\type$ are as above and $\Gamma$ is a set of statements.
  \item\label{def:cls} The {\em closure} of a set $H\subseteq\ex$ relative to $D$ is the smallest set $C$ such that:
  \begin{itemize}
    \item $H\cup\{\vh_v\mid v\in\vr\}\subseteq C$
    \item For every $\ang{D',H',A'}\in\Gamma$ and every substitution $\sigma$, if
    \begin{itemize}
      \item For all $e\in H'\cup\{\vh_v\mid v\in\vr\}$, $\sigma(e)\in C$, and
      \item For all $\{\alpha,\beta\}\in D'$, if $\gamma\in{\cal V}(\sigma(\vh_\alpha))$ and $\delta\in{\cal V}(\sigma(\vh_\beta))$, then $\{\gamma,\delta\}\in D$,
    \end{itemize}
    then $\sigma(A')\in C$.
  \end{itemize}
  \item A pre-statement $\ang{D,H,A}$ is {\em provable} if $A$ is in the closure of $H$ relative to $D$, and a theorem is a statement that is the reduct of a provable pre-statement.
  \item Let $\tc$ be the set of typecodes of theorems. (Explicitly, this is $\tc=\vt\cup\{\type(A)\mid\ang{D,H,A}\in\Gamma\}$.)
  \item Two formal systems $\ang{\cn,\vr,\type,\Gamma}$ and $\ang{\cn,\vr,\type,\Gamma'}$ are {\em equivalent} if they generate the same set of theorems (or equivalently, if every axiom in one is a theorem of the other).
\end{enumerate}
\begin{table}[t]
  \centering
  \begin{tabular}{|lll|lll|lll|} \hline
    $\cn$ & constants & \autoref{sec:recap} &
    $\vr$ & variables & \autoref{sec:recap} &
    $\type$ & type of expr & \autoref{sec:recap} \\ \hline
    $\ex$ & expressions & \autoref{sec:recap} &
    $\dv$ & distinct variables & \autoref{sec:recap} &
    $\cal V$ & variables in expr & \autoref{sec:recap} \\ \hline
    $\sigma$ & substitution & \autoref{sec:recap} &
    $\vh$ & variable hypotheses & \autoref{sec:recap} &
    $\tc$ & typecodes & \autoref{sec:recap} \\ \hline
    $\vt$ & variable typecodes & \autoref{sec:recap} &
    $\uv$ & universe & \autoref{sec:model} &
    $\vl,\mu$ & valuations & \autoref{sec:model} \\ \hline
    $\fresh$ & freshness relation & \autoref{sec:model} &
    $\eta$ & interpretation & \autoref{sec:model} &
    $\sa$ & syntax axioms & \autoref{sec:grammar} \\ \hline
    $\syn$ & syntax for expr & \autoref{sec:trees} &
    $\st$ & syntax trees & \autoref{sec:trees} &
    $\pi$ & interpretation (tree) & \autoref{sec:treemodel} \\ \hline
  \end{tabular}
  \vspace*{0pt}
  \caption{Definition cheat sheet}
\end{table}

\subsubsection{Why a global type function?}
A careful comparison with Appendix C of the Metamath book \cite{metamath} shows that in the original definition a variable only has a type locally (inside a statement), while we require all variables to have a unique and globally defined type, provided by the $\type$ function. In practice, variables are never reintroduced with a different type, so this is not a strong requirement. Additionally, there is some ongoing work to amend the specification to disallow such multi-typed variables.

Nevertheless, it is a simple fix to convert a formal system with multi-typed variables to one with a global type function: Take the set of variables to be $\vt\times\vr$, and define $\type(c,v)=c$. Then whenever a variable $v$ appears in a statement with type $c$, use the variable $\ang{c,v}$ instead. This is equivalent to just prepending the type of the variable to its name, so that uses of the same variable with a different type are distinguished. 

\subsection{Models of formal systems}\label{sec:model}
Fix a collection of sets $\uv_c$ for $c\in\tc$, which will represent the ``universe'' of objects of each typecode.
\begin{definition}
A {\em valuation} is a function $\mu$ on $\vr$ such that $\mu(v)\in\uv_{\type(v)}$ for all $v\in\vr$. The set of all valuations is denoted by $\vl$.
\end{definition}
\begin{definition}
A {\em freshness relation} $\fresh$ is a symmetric relation on the disjoint union $\bigsqcup\uv=\bigsqcup_{c\in\tc}\uv_c$ such that for any $c\in\vt$ and any finite set $W\subseteq\bigsqcup\uv$, there is a $v\in\uv_c$ with $v\fresh w$ for all $w\in W$.
\end{definition}

\begin{definition}\label{def:model}
A {\em model} of the formal system $\ang{\cn,\vr,\type,\Gamma}$ is a tuple $\ang{\uv,\fresh,\ab\eta}$ where $\uv$ is a function on $\tc$ and $\fresh$ is a freshness relation, and for each $\mu\in\vl$, $\eta_\mu$ is a partial function on $\ex$ such that:
\begin{itemize}
  \item (Type correctness) For all $e\in\ex$, if $\eta_\mu(e)$ is defined then $\eta_\mu(e)\in\uv_{\type(e)}$.
  \item (Variable application) For all $v\in\vr$, $\eta_\mu(\vh_v)=\mu(v)$.
  \item (Axiom application) For each $\ang{D,H,A}\in\Gamma$, if
  \begin{itemize}
    \item $\mu(\alpha)\fresh\mu(\beta)$ for all $\{\alpha,\beta\}\in D$, and
    \item $\eta_\mu(h)$ is defined for all $h\in H$,
  \end{itemize}
  then $\eta_\mu(A)$ is defined.
  \item (Substitution property) For each substitution $\sigma$ and $e\in\ex$, $\eta_\mu(\sigma(e))=\eta_{\sigma(\mu)}(e)$, where $\sigma(\mu)\in\vl$ is defined by $\sigma(\mu)(v)=\eta_\mu(\sigma(\vh_v))$.
  \item (Dependence on present variables) For all $\nu\in\vl$, $e\in\ex$, If $\mu(v)=\nu(v)$ for all $v\in{\cal V}(e)$, then $\eta_\mu(e)=\eta_\nu(e)$.
  \item (Freshness substitution) For all $v\in\bigsqcup\uv$,  $e\in\ex$, if $\eta_\mu(e)$ is defined and $v\fresh\mu(w)$ for all $w\in{\cal V}(e)$, then $v\fresh\eta_\mu(e)$.
\end{itemize}

Here equality means that one side is defined iff the other is and they have the same value. We say that $e\in\ex$ is {\em true in the model} if $\eta_\mu(e)$ is defined for all $\mu\in\vl$.
\end{definition}

The key property of a model is {\em soundness}, the fact that the axiom application law applies also to theorems.
\begin{theorem}\label{thm:sound}
For any theorem $\ang{D,H,A}$, if $\mu(\alpha)\fresh\mu(\beta)$ for all $\{\alpha,\beta\}\in D$ and $\eta_\mu(h)$ is defined for all $h\in H$, then $\eta_\mu(A)$ is defined.
\end{theorem}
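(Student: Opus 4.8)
The plan is to establish soundness first for \emph{provable pre-statements}, by induction on the closure operation defined in \autoref{sec:recap}, and then to derive the stated form for theorems by adjusting the valuation on the auxiliary variables that a proof may introduce but which do not occur in $H\cup\{A\}$. The engine of the inductive step is the following observation: \emph{if $e_1,e_2\in\ex$ are such that $\eta_\mu(e_1)$ and $\eta_\mu(e_2)$ are defined and $\mu(\gamma)\fresh\mu(\delta)$ for all $\gamma\in{\cal V}(e_1)$ and $\delta\in{\cal V}(e_2)$, then $\eta_\mu(e_1)\fresh\eta_\mu(e_2)$.} This is two applications of the \emph{freshness substitution} clause of the model definition: applied with the element $\mu(\delta)$ and the expression $e_1$ it yields $\mu(\delta)\fresh\eta_\mu(e_1)$ for each $\delta\in{\cal V}(e_2)$, and then, applied with the element $\eta_\mu(e_1)$ and the expression $e_2$, it yields $\eta_\mu(e_1)\fresh\eta_\mu(e_2)$.

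For the first step I would prove: if $A$ lies in the closure of $H$ relative to $D_0$, $\mu(\alpha)\fresh\mu(\beta)$ for all $\{\alpha,\beta\}\in D_0$, and $\eta_\mu(h)$ is defined for all $h\in H$, then $\eta_\mu(A)$ is defined. Since the closure is the least set closed under the two generating rules, it suffices to check that the set $C'$ of those $e\in\ex$ for which the conclusion holds, uniformly in all $\mu$ meeting the hypotheses, contains $H\cup\{\vh_v\mid v\in\vr\}$ and is closed under the axiom-application rule. The first part is immediate: $H\subseteq C'$ is one of the hypotheses, and $\eta_\mu(\vh_v)=\mu(v)$ is always defined by variable application. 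For the second, suppose $\ang{D',H',A'}\in\Gamma$ and $\sigma$ is a substitution with $\sigma(e)\in C'$ for all $e\in H'\cup\{\vh_v\mid v\in\vr\}$ and with the distinct-variable side condition holding relative to $D_0$, and fix a $\mu$ meeting the hypotheses. Since $\sigma(\vh_v)\in C'$ for every $v$, each value $\sigma(\mu)(v)=\eta_\mu(\sigma(\vh_v))$ is defined and, by type correctness, lies in $\uv_{\type(v)}$, so $\sigma(\mu)\in\vl$ and the substitution property gives that $\eta_\mu(\sigma(A'))$ is defined iff $\eta_{\sigma(\mu)}(A')$ is. The latter follows from the axiom-application clause of the model definition: its first premise, that $\eta_{\sigma(\mu)}(h')$ be defined for each $h'\in H'$, holds because $\eta_{\sigma(\mu)}(h')=\eta_\mu(\sigma(h'))$ and $\sigma(h')\in C'$; its second premise, that $\sigma(\mu)(\alpha)\fresh\sigma(\mu)(\beta)$ for $\{\alpha,\beta\}\in D'$, holds by the observation above applied to $e_1=\sigma(\vh_\alpha)$ and $e_2=\sigma(\vh_\beta)$, since the side condition guarantees $\{\gamma,\delta\}\in D_0$, hence $\mu(\gamma)\fresh\mu(\delta)$, whenever $\gamma\in{\cal V}(\sigma(\vh_\alpha))$ and $\delta\in{\cal V}(\sigma(\vh_\beta))$. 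Thus $\sigma(A')\in C'$, and in particular $A\in C'$.

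To pass from here to the theorem, recall that a theorem $\ang{D,H,A}$ is the reduct of a provable pre-statement $\ang{D_0,H,A}$, so $A$ is in the closure of $H$ relative to $D_0$ while $D=D_0\cap\pow{{\cal V}(H\cup\{A\})}$. The hypothesis only provides $\mu(\alpha)\fresh\mu(\beta)$ for $\{\alpha,\beta\}\in D$, so I would build a valuation $\nu$ agreeing with $\mu$ on ${\cal V}(H\cup\{A\})$ that also respects the distinct-variable conditions of $D_0$ that are actually used in a proof. Membership of $A$ in the closure is witnessed by a finite derivation --- the axiom-application rule may always be applied with $\sigma$ equal to the identity outside the finitely many variables of $H'\cup\{A'\}$, which (since the members of $\Gamma$ are reducts) makes all of its premises reduce to finitely many conditions, so the closure operation is finitary --- and such a derivation mentions only finitely many variables and invokes only finitely many pairs of $D_0$. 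Enumerating the finitely many variables of the derivation that lie outside ${\cal V}(H\cup\{A\})$ and choosing for each, in turn, a value $\nu(w)\in\uv_{\type(w)}$ that is fresh from the finitely many values $\mu(u)$ with $u\in{\cal V}(H\cup\{A\})$ and from all $\nu(w')$ chosen before it --- possible by the defining property of $\fresh$ --- yields a $\nu$ respecting every distinct-variable pair used by the derivation (the pairs lying inside ${\cal V}(H\cup\{A\})$ already belong to $D$). That derivation is then still valid with the restriction of $D_0$ to the variables it mentions as its ambient distinct-variable set, and $\nu$ respects that finite set, so the first step applies; by \emph{dependence on present variables}, $\eta_\nu(h)=\eta_\mu(h)$ is defined for each $h\in H$ (which is what lets us invoke that step), so $\eta_\nu(A)$ is defined, and \emph{dependence on present variables} once more gives $\eta_\mu(A)=\eta_\nu(A)$, which is therefore defined.

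I expect the main obstacle to be precisely this last reconciliation. The reduct $D$ retains only the distinct-variable conditions among the variables of $H\cup\{A\}$, whereas the closure induction needs all of $D_0$, and auxiliary distinct-variable conditions can genuinely be used inside a proof; moreover one cannot in general find an element of a universe $\uv_c$ that is fresh from itself, so distinct auxiliary variables must be sent to distinct fresh elements. Consequently the greedy choice of $\nu$ only goes through after the proof has been reduced to a finite derivation, which makes establishing the finitariness of the closure operation a genuine step rather than a routine aside.
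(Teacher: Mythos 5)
Your proof is correct and follows essentially the same route as the paper's: reduce to provable pre-statements by greedily choosing mutually fresh values for the auxiliary variables outside ${\cal V}(H\cup\{A\})$ (using dependence on present variables to transfer hypotheses and conclusion), then induct on the closure, using two applications of freshness substitution to get $\sigma(\mu)(\alpha)\fresh\sigma(\mu)(\beta)$ and the substitution property to invoke the axiom application law at $\sigma(\mu)$. The only difference is that you explicitly justify that membership in the closure is witnessed by a finite derivation, whereas the paper simply assumes the pre-statement's distinct-variable set involves only finitely many variables beyond ${\cal V}(H\cup\{A\})$.
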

\begin{proof}
By dependence on present variables, we may replace $\mu$ by any other $\mu'$ such that $\mu(v)=\mu'(v)$ for all $v\in{\cal V}(H\cup\{A\})$ without affecting the truth of the hypotheses or conclusion. If $\ang{D,H,A}$ is the reduct of $\ang{D',H,A}$ where $D'$ refers to finitely many additional variables (i.e. $D'\subseteq{\cal P}(V)$ for some finite set $V\supseteq{\cal V}(H\cup\{A\})$), order these as $V=\{v_1,\dots,v_n\}$ with ${\cal V}(H\cup\{A\})=\{v_1,\dots,v_k\}$ for some $k\le n$. Then use the freshness constraint to recursively select values $\mu'(v_i)$ for each $k<i\le n$ such that $\mu'(v_i)\fresh \mu'(v_j)$ for all $j<i$. Then this new $\mu'$ will satisfy the hypothesis $\mu'(\alpha)\fresh\mu'(\beta)$ for all $\{\alpha,\beta\}\in D'$, so that it suffices to prove the theorem for provable pre-statements.

We prove by induction that whenever $A$ is in the closure of $H$ relative to $D$, $\eta_\mu(A)$ is defined. If $A\in H$, it is true by assumption, and if $A=\vh_v$ for some $v\in\vr$, then it is true by the variable application law. Otherwise we are given $\ang{D',H',A'}\in\Gamma$ and a substitution $\sigma$, such that for all $e\in H'\cup\{\vh_v\mid v\in\vr\}$, $\eta_\mu(\sigma(e))$ is defined (by the induction hypothesis), and for all $\{\alpha,\beta\}\in D'$, if $\gamma\in{\cal V}(\sigma(\vh_\alpha))$ and $\delta\in{\cal V}(\sigma(\vh_\beta))$, then $\{\gamma,\delta\}\in D$, and we wish to show that $\eta_\mu(A)$, with $A=\sigma(A')$, is defined.

For each $\gamma\in{\cal V}(\sigma(\vh_\alpha))$ and $\delta\in{\cal V}(\sigma(\vh_\beta))$, $\{\gamma,\delta\}\in D$ implies $\mu(\gamma)\fresh\mu(\delta)$ from the theorem hypothesis, hence by freshness substitution on the left and the right, $\mu(\gamma)\fresh\eta_\mu(\sigma(\vh_\beta))$, and then  $\eta_\mu(\sigma(\vh_\alpha))\fresh\eta_\mu(\sigma(\vh_\beta))$, or equivalently, $\sigma(\mu)(\alpha)\fresh\sigma(\mu)(\beta)$ for each $\{\alpha,\beta\}\in D'$.

Apply the axiom application law to $\sigma(\mu)$ and $\ang{D',H',A'}$. The substitution property reduces $\eta_\mu(\sigma(e))$ to $\eta_{\sigma(\mu)}(e)$ in the hypotheses, and $\eta_{\sigma(\mu)}(A')$ to $\eta_\mu(\sigma(A'))$ in the conclusion, hence $\eta_\mu(\sigma(A'))$ is defined, as we wished to show.
\qed\end{proof}

In particular, if $\ang{\emptyset,\emptyset,A}$ is provable, then $\eta_\mu(A)$ is defined for all $\mu\in\vl$, which makes it a useful technique for showing that certain strings are not provable (see \autoref{sec:independence}).

For any formal system, there is a model, where $\uv_c=\{\ast\}$ for each $c$, $\ast\fresh\ast$ is true, and $\eta_\mu(e)=\ast$ for all $\mu,e$. Thus statements like ``formal system $X$ has a model'' are not as useful here as they are in first-order logic. To marginalize this kind of model, we will call a model where each $\eta_\mu$ is a total function {\em trivial}. (We will have a slightly wider definition of trivial model given grammatical information, cf. \autoref{def:gmodel}.)

Although this defines the property of being a model under the full generality of Metamath formal systems, the process simplifies considerably when expressions can be parsed according to a grammar.

\subsection{Grammatical parsing}\label{sec:grammar}
\begin{definition}
A formal system is said to be {\em weakly grammatical} if for every $\ang{D,H,A}\in\Gamma$, if $\type(A)\in\vt$, then there is some axiom $\ang{\emptyset,\emptyset,A'}\in\Gamma$ such that $\sigma(A')=A$ for some substitution $\sigma$ and no variable occurs more than once in $A'$.

For these systems we will define \[\sa=\{A\mid \ang{\emptyset,\emptyset,A}\in\Gamma\land \type(A)\in\vt\land\forall mn,(A_m=A_n\in\vr\to m=n)\},\] the set of {\em syntax axioms}. (We will identify the expression $A$ with its statement $\ang{\emptyset,\emptyset,A}$ when discussing syntax axioms.)
\end{definition}

For example, any context-free grammar is a weakly grammatical formal system, where each production translates to a syntax axiom, and each nonterminal translates to a variable typecode. Most recursive definitions of a well-formed formula will fit this bill, although we can't capture the notion of bound variables with this alone. 

Conversely, a weakly grammatical formal system yields a context-free grammar, where the terminals are $\cn\setminus\vt$, the non-terminals are $\vt$, and for each $A\in\sa$ there is a production $\type(A)\to\alpha$, where $\alpha_n=A_{n+1}\in\cn$ if $A_{n+1}\in\cn$ or  $\alpha_n=\type(A_{n+1})\in\vt$ if $A_{n+1}\in\vr$. (This assumes that $A_{n+1}\in\vt$ is always false, but the two sets can be disjointified if this is not the case.)

\begin{definition}
A {\em grammatical formal system} is a weakly grammatical formal system augmented with a function $\syn:\tc\to\vt$ such that $\syn(c)=c$ for all $c\in\vt$ and, defining $\syn(e)$ for $e\in\ex$ such that $\syn(e)_0=\syn(e_0)$ and $\syn(e)_n=e_n$ for $n>0$, $\ang{\emptyset,\emptyset,\syn(e)}$ is a provable statement for all $\ang{D,H,A}\in\Gamma$ and $e\in H\cup\{A\}$.
\end{definition}

\begin{remark}
Of course, this notion is of interest primarily because it is satisfied by all major Metamath databases; in particular, \texttt{set.mm} is a grammatical formal system, with $\vt={\rm \{set,class,wff\}}$, $\tc=\vt\cup\{\vdash\}$, and $\syn(\vdash)={\rm wff}$.
\end{remark}

\begin{definition}\label{def:gmodel}
A {\em model} of a grammatical formal system is a model in the sense of \autoref{def:model} which additionally satisfies $\uv_c\subseteq\uv_{\syn(c)}$, $v\fresh w_c\leftrightarrow v\fresh w_{\syn(c)}$ when $w\in\uv_c$ (and $w_c$, $w_{\syn(c)}$ are its copies in the disjoint union), and\\ $\eta_\mu(e)=\eta_\mu(\syn(e))$ if the latter is in $\uv_c$, otherwise undefined. Such a model is {\em trivial} if $\uv_c=\uv_{\syn(c)}$ for all $c\in\tc$.
\end{definition}

\subsection{Tree representation of formal systems}\label{sec:trees}
The inductive definition of the closure of a set of statements immediately leads to a tree representation of proofs. A proof tree is a tree with nodes labeled by statements and edges labeled by expressions.

\begin{definition}
We inductively define the statement ``$T$ is a {\em proof tree} for $A$'' (relative to $D,H$) as follows:
\begin{itemize}
  \item For each $e\in H\cup\{\vh_v\mid v\in\vr\}$, the single-node tree labeled by the reduct of $\ang{D,H,e}$ is a proof tree for $e$.
  \item For every $\ang{D',H',A'}\in\Gamma$ and every substitution $\sigma$ satisfying the conditions for $\sigma(A')\in C$ in \hyperref[def:cls]{\autoref*{sec:recap}.\ref*{def:cls}}, the tree labeled by $\ang{D',H',A'}$ with edges for each $e\in H'\cup{\cal V}(H\cup\{A\})$ leading to a proof tree for $\sigma(e)$, is a proof tree for $\sigma(A')$.
\end{itemize}
\end{definition}

The definition of closure ensures that there is a proof tree for $A$ relative to $D,H$ iff $\ang{D,H,A}$ is provable pre-statement. (The branches for variables outside ${\cal V}(H\cup\{A\})$ are discarded because they can always be replaced by the trivial substitution $\sigma(\ang{v})=\ang{v}$ without affecting the closure deduction.) Additionally, we can prove by induction that every proof tree $T$ encodes a unique expression $\expr(T)$.

\begin{definition}\label{def:unambig}
An {\em unambiguous formal system} is a grammatical formal system whose associated context-free grammar is unambiguous.
\end{definition}

\begin{remark}
Note that for this to make sense we need $\sa$ to contain only axioms and not theorems, i.e. this property is not preserved by equivalence of formal systems. For such systems every $\expr$ is an injection when restricted to the set $\st$ of {\em syntax trees}, trees $T$ relative to $\emptyset,\emptyset$ such that $\type(T):=\type(\expr(T))\in\vt$ (or equivalently, $\type(T)=\type(A)\in\vt$ where $\ang{D,H,A}$ is the root of $T$). The subtrees of a syntax tree are also syntax trees, and the nodes are syntax axioms, with variables (of the form $\vh_v$) at the leaves.
\end{remark}

With this, we can ``rebuild'' the whole theory using trees instead of strings, because the all valid substitutions have unique proof tree representations. The expressions in this new language will be trees, whose nodes are syntax axioms such as $\texttt{wa}=\ $``$\mathrm{wff}\ (\vph\land\psi)$'', representing the ``and'' function, with variables at the leaves. However, we no longer need to know that \texttt{wa} has any structure of its own, besides the fact that it takes two wff variables and produces a wff. Thus we can discard the set $\cn$ entirely. (That is, the constant ``('' has no meaning of its own here.)

Instead, we take as inputs to the construction the set $\tc'$ of typecodes, a set $\vr'$ of variables, a set $\sa'$ of things we call syntax axioms, although they have no internal structure, the function $\type':\vr'\cup\sa'\to\vt'$, as well as $\syn':\tc'\to\vt'$. A tree $T\in\st'$ is either a variable from $\vr'$ or a syntax axiom $a\in\sa'$ connecting to more subtrees; each syntax axiom has a set $v^a_i$ of variables labeling the edges, and a type $\type'(a)$; $\type'(T)$ is defined as the type of the root of $T$.

We replace $\ex$ in the string representation with $\ex'$, which consists of tuples $\ang{c,T}$ where $c\in\tc'$, $T\in\st'$, and $\syn'(c)=\type'(T)$. We extend $\type'$ to $\ex'$ by $\type'(\ang{c,T})=c$. ${\cal V}'(T)$ is defined by induction such that ${\cal V}'(v)=\{v\}$ and ${\cal V}'(T)$ at a syntax axiom is the union of ${\cal V}'(T_i)$ over the child subtrees $T_i$. A substitution $\sigma$ is a function $\st'\to\st'$ such that $\sigma(a[T_1,\dots,T_n])=a[\sigma(T_1),\dots,\sigma(T_n)]$ for each syntax axiom $a$, with the value at variables left undetermined, extended to $\ex'\to\ex'$ by $\sigma(\ang{c,T})=\ang{c,\sigma(T)}$.

Pre-statements and statements are defined exactly as before: A pre-statement is a tuple $\ang{D,H,A}$ where $D\subseteq\dv'$, $H\subseteq\ex'$ is finite, and $A\in\ex'$. The reduct of $\ang{D,H,A}$ is $\ang{D_M,H,A}$ where $D_M=D\cap\pow{{\cal V}'(H\cup\{A\})}$. A tree formal system (this time unambiguous by definition) is a tuple $\ang{\tc',\vr',\sa',\type',\syn',\Gamma'}$ where $\Gamma'$ is a set of statements. The closure of a set $H\subseteq\ex'$ relative to $D$ is defined as in the string case, but the base case instead takes $H\subseteq C$ and $\ang{\type'(T),T}\in C$ for every $T\in\st'$.

To map an unambiguous formal system $\ang{\cn,\vr,\type,\Gamma,\syn}$ to a tree formal system $\ang{\tc',\vr',\sa',\type',\syn',\Gamma'}$, one takes $\tc'=\tc$, $\vr'$ to be the set of $\vh_v$ singleton trees for $v\in\vr$, $\sa'=\sa$, $\type'(T)=\type(T)$, $\syn'=\syn$, and $\Gamma'=\{\ang{D,\{t(h)\mid h\in H\},t(A)}\mid\ang{D,H,A}\in\Gamma\setminus\st\}$, where $t(e)=\{\type(e),\expr^{-1}(\syn(e))\}$.

These two formal systems are isomorphic, in the sense that expressions and statements can be mapped freely, respecting the definitions of theorems and axioms, variables and typecodes.

\subsection{Models of tree formal systems}\label{sec:treemodel}
Given the isomorphism of the previous section, the model theory of unambiguous formal systems can be mapped to models of tree formal systems, with $\ang{\uv,\fresh,\eta}$ satisfying an exactly equivalent set of properties. But the major advantage of the tree formulation is that the substitution property implies that $\eta$ is completely determined except at syntax axioms, so for trees we will replace $\eta$ with a new function $\pi$.

\begin{definition}
Given a function $\uv$ on $\tc'$ satisfying $\uv_c\subseteq\uv_{\syn(c)}$, and $\pi$ a $\sa$-indexed family of functions, where $\pi_a:\prod_i\uv_{\type(v^a_i)}\to\uv_{\type(a)}$ for each $a\in\sa$, define $\eta_\mu$ for $\mu\in\vl$ recursively such that:
\begin{itemize}
  \item For all $v\in\vr$, $\eta_\mu(v)=\mu(v)$.
  \item For each $T\in\st$ with $a\in\sa$ at the root, $\eta_\mu(T)=\pi_a(\{\eta_\mu(T_i)\}_i)$
  \item For $e=\ang{c,T}\in\ex'$, $\eta_\mu(e)=\eta_\mu(T)$ if $\eta_\mu(T)\in\uv_c$, otherwise undefined.
\end{itemize}
\end{definition}

\begin{definition}
A model of a tree formal system is a tuple $\ang{\uv,\fresh,\pi}$ where $\uv$ is a function on $\tc'$ satisfying $\uv_c\subseteq\uv_{\syn(c)}$, and $\fresh$ is a freshness relation (which is extended from $\bigsqcup_{v\in\vt}\uv_c$ to $\bigsqcup_{v\in\tc}\uv_c$ by setting $v\fresh w_c$ iff $v\fresh w_{\syn(c)}$ for the copies of $w$ in the disjoint union), and $\pi$ is a $\sa$-indexed family of functions, where $\pi_a:\prod_i\uv_{\type(v^a_i)}\to\uv_{\type(a)}$ for each $a\in\sa$, such that for each $\mu\in\vl$, and defining $\eta$ as above:

\begin{itemize}
  \item For each $\ang{D,H,A}\in\Gamma'$, if
  \begin{itemize}
    \item $\mu(\alpha)\fresh\mu(\beta)$ for all $\{\alpha,\beta\}\in D$, and
    \item $\eta_\mu(h)$ is defined for all $h\in H$,
  \end{itemize}
  then $\eta_\mu(A)$ is defined.
  \item For all $v\in\bigsqcup\uv$, $a\in\sa$, and $f\in\prod_i\uv_{\type(v^a_i)}$, if $v\fresh f_i$ for all $i$, then $v\fresh\pi_a(f)$.
\end{itemize}
\end{definition}

\begin{theorem}\label{thm:factor}
Let $\ang{\uv,\fresh,\pi}$ be a model for the tree formal system. Then the associated $\ang{\uv,\fresh,\eta}$ is a model in the sense of \autoref{def:gmodel}.
\end{theorem}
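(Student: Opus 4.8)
The plan is to verify, clause by clause, that the triple $\ang{\uv,\fresh,\eta}$ produced from $\ang{\uv,\fresh,\pi}$ satisfies the tree-translated conditions of \autoref{def:model} together with the three additional conditions of \autoref{def:gmodel}. One preliminary observation organizes most of the argument: since each $\pi_a$ is a \emph{total} function $\prod_i\uv_{\type(v^a_i)}\to\uv_{\type(a)}$ and, by well-formedness of syntax trees, the child subtree along the edge labelled $v^a_i$ has type $\type(v^a_i)$, an easy structural induction on $T\in\st$ shows that $\eta_\mu(T)$ is always defined and lies in $\uv_{\type(T)}$. Hence partiality enters only at the $\ex'$ layer, where $\eta_\mu(\ang{c,T})$ is defined precisely when $\eta_\mu(T)\in\uv_c$, and then equals $\eta_\mu(T)$. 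This yields type correctness at once; variable application is the base clause $\eta_\mu(v)=\mu(v)$ read through the corresponding one-node variable-hypothesis expression; axiom application and the freshness condition at syntax axioms are literally two of the hypotheses defining a model of the tree formal system; and the containments $\uv_c\subseteq\uv_{\syn(c)}$ together with the biconditional $v\fresh w_c\leftrightarrow v\fresh w_{\syn(c)}$ are built into that definition, so none of these require work. The grammatical clause $\eta_\mu(e)=\eta_\mu(\syn(e))$ is a short computation: $\syn(\ang{c,T})=\ang{\syn(c),T}$ and $\eta_\mu(\ang{\syn(c),T})=\eta_\mu(T)$, which is always defined because $\eta_\mu(T)\in\uv_{\type(T)}=\uv_{\syn(c)}$, while $\eta_\mu(\ang{c,T})$ is defined and equal to $\eta_\mu(T)$ exactly when $\eta_\mu(T)\in\uv_c$.

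The three remaining clauses each reduce to a routine induction on $T\in\st$ followed by a pass through the wrapper $\ang{c,T}\mapsto T$. For \emph{dependence on present variables}: if $\mu$ and $\nu$ agree on ${\cal V}'(T)$ then $\eta_\mu(T)=\eta_\nu(T)$; the base case is immediate, and at a syntax axiom $a$ one uses ${\cal V}'(T)=\bigcup_i{\cal V}'(T_i)$, so the induction hypothesis gives $\eta_\mu(T_i)=\eta_\nu(T_i)$ for all $i$ and hence equal values after applying $\pi_a$. For \emph{freshness substitution}: if $v\fresh\mu(w)$ for all $w\in{\cal V}'(T)$ then $v\fresh\eta_\mu(T)$; the base case is $\eta_\mu(v)=\mu(v)$, and at a syntax axiom the induction hypothesis gives $v\fresh\eta_\mu(T_i)$ for every child, whence $v\fresh\pi_a(\{\eta_\mu(T_i)\}_i)=\eta_\mu(T)$ by the model axiom ``$v\fresh f_i$ for all $i$ implies $v\fresh\pi_a(f)$'', and the lift to $\ex'$ uses the extension of $\fresh$ across $\syn$.

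The step I expect to be fiddliest is the \emph{substitution property} $\eta_\mu(\sigma(e))=\eta_{\sigma(\mu)}(e)$, where $\sigma(\mu)(v):=\eta_\mu(\sigma(\vh_v))$. Here one should first check that $\sigma(\mu)$ really is a valuation, i.e. $\sigma(\mu)\in\vl$: since $\type(\sigma(\vh_v))=\type(v)$, type correctness gives $\eta_\mu(\sigma(\vh_v))\in\uv_{\type(v)}$, and only then is $\eta_{\sigma(\mu)}$ even defined. One then proves the tree-level identity $\eta_\mu(\sigma(T))=\eta_{\sigma(\mu)}(T)$ by induction on $T\in\st$: for a variable this is exactly the definition of $\sigma(\mu)$, and for a syntax axiom $a$ it follows from $\sigma(a[T_1,\dots,T_n])=a[\sigma(T_1),\dots,\sigma(T_n)]$ together with the induction hypothesis applied inside $\pi_a$. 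Lifting to $e=\ang{c,T}$, we have $\sigma(e)=\ang{c,\sigma(T)}$, and since $\eta_\mu(\sigma(T))=\eta_{\sigma(\mu)}(T)$ the test ``$\in\uv_c$'' returns the same answer on the two sides, so they are defined together and agree. With every clause checked, $\ang{\uv,\fresh,\eta}$ is by definition a model in the sense of \autoref{def:gmodel}, which completes the proof.
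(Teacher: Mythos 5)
Your proposal is correct and follows essentially the same route as the paper: clause-by-clause verification, with structural inductions on syntax trees for the substitution, dependence-on-present-variables, and freshness-substitution laws, then lifting through the $\ang{c,T}$ wrapper. The extra details you supply (totality of $\eta_\mu$ on $\st$, the check that $\sigma(\mu)\in\vl$, and the explicit verification of the grammatical clauses of \autoref{def:gmodel}) are points the paper leaves implicit, and they are handled correctly.
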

\begin{proof}\leavevmode
\begin{itemize}
  \item The variable application law is true by definition of $\eta$, and the axiom application law is true by assumption.
  \item For the substitution law, suppose $\sigma$ and $e\in\ex'$ are given. We want to show that $\eta_\mu(\sigma(e))=\eta_{\sigma(\mu)}(e)$, where $\sigma(\mu)\in\vl$ is defined by $\sigma(\mu)(v)=\eta_\mu(\sigma(v))$.

  We show this for $\syn(e)=\ang{\type(T),T}$ by induction on $T$. In the base case, $T=v$ for $v\in\vt$, so $\eta_{\sigma(\mu)}(v)=\sigma(\mu)(v)=\eta_\mu(\sigma(v))$. Otherwise let $a\in\sa$ be the root of $T$, so \[\eta_{\sigma(\mu)}(T)=\pi_a(\{\eta_{\sigma(\mu)}(T_i)\}_i) = \pi_a(\{\eta_\mu(\sigma(T_i))\}_i)=\eta_\mu(\sigma(T)).\]

  Then for $e=\ang{c,T}$, if $\eta_{\sigma(\mu)}(T)=\eta_\mu(\sigma(T))$ is in $\uv_c$, then both are defined and equal, otherwise both are undefined.

  \item Dependence on present variables is also provable by induction; in the base case $\eta_\mu(v)=\mu(v)$ only depends on ${\cal V}(v)=\{v\}$; and for a tree with $a\in\sa$ at the root, $\eta_\mu(T)=\pi_a(\{\eta_\mu(T_i)\}_i)$ only depends on ${\cal V}(T)=\bigcup_i{\cal V}(T_i)$. For $e=\ang{c,T}$ this property is maintained since ${\cal V}(e)={\cal V}(T)$ and $\eta_\mu(e)=\eta_\mu(T)$ or undefined.

  \item For the freshness substitution law, in the base case $v\fresh\mu(v)=\eta_\mu(v)$; and for a tree with $a\in\sa$ at the root, if $v\fresh{\cal V}(T)$ then $v\fresh{\cal V}(T_i)$ so $v\fresh\eta_\mu(T_i)$ for each $i$, and then by definition $v\fresh\pi_a(\{\eta_\mu(T_i)\}_i)=\eta_\mu(T)$. For $e=\ang{c,T}$ this property is maintained since ${\cal V}(e)={\cal V}(T)$ and $\eta_\mu(e)=\eta_\mu(T)$ or undefined.
\end{itemize}
\qed\end{proof}

By the isomorphism this approach also transfers to models of unambiguous formal systems. So we are left with the conclusion that a model can be specified by its functions $\pi_a$, for each $a\in\sa$; this is known in conventional model theory as the interpretation function.

\section{Examples of models}\label{sec:ex}
\subsection{Propositional logic}\label{sec:prop}
We start with a model for classical propositional logic. We define:

\[\cn=\{{(},{)},\to,\lnot,\mathrm{wff},\vdash\}\]
\[\vr=\{\vph,\psi,\chi,\dots\}\]
\[\Gamma=\{\texttt{wn}, \texttt{wi}, \texttt{ax-1}, \texttt{ax-2}, \texttt{ax-3},\texttt{ax-mp}\},\]

where the axioms are
\begin{itemize}
  \item \texttt{wn}: $\mathrm{wff}\ \lnot\vph$
  \item \texttt{wi}: $\mathrm{wff}\ (\vph\to\psi)$
  \item \texttt{ax-1}: $\vdash(\vph\to(\psi\to\vph))$
  \item \texttt{ax-2}: $\vdash((\vph\to(\psi\to\chi))\to((\vph\to\psi)\to(\vph\to\chi)))$
  \item \texttt{ax-3}: $\vdash((\lnot\vph\to\lnot\psi)\to(\psi\to\vph))$
  \item \texttt{ax-mp}: $\{\vdash\vph,\vdash(\vph\to\psi)\}$ implies $\vdash\psi$
\end{itemize}

Additionally, $\tc=\{\mathrm{wff},\vdash\}$ and $\vt=\{\mathrm{wff}\}$ are implied by the preceding definition. 

Although the axiom strings appear structured with infix notation, this is not required; we could just as easily have an axiom with string $\vdash{)}\vph{(}\to$. That this is not the case is what makes this a grammatical formal system, with $\syn(\mathrm{wff})=\syn(\vdash)=\mathrm{wff}$. The syntax axioms are $\sa=\{\texttt{wn}, \texttt{wi}\}$. In fact this formal system is unambiguous, but we will not prove this here.

This formal system has a nontrivial model:
\[\uv_{\mathrm{wff}}=\bool:=\{\tT,\tF\}\quad\uv_{\vdash}=\{\tT\}\]
\[x\fresh y\mbox{ is always true}\]
\[\pi_\lnot(\tF)=\tT,\quad \pi_\lnot(\tT)=\tF\]
\[\pi_\to(\tF,\tF)=\tT,\quad \pi_\to(\tF,\tT)=\tT, \quad\pi_\to(\tT,\tF)=\tF,\quad \pi_\to(\tT,\tT)=\tT\]

We will write ``$\alpha=\tT$'' simply as ``$\alpha$ is true'' or ``$\alpha$'', treating the elements of $\bool$ as actual truth values in the metalogic. The $\pi_a$ functions generate $\eta$ as previously described, so that, for example, if $\mu(\vph)=\tT$, $\mu(\psi)=\tF$, then $\eta_\mu(\mathrm{wff}\ (\lnot\vph\to(\vph\to\psi))) = \pi_\to(\pi_\lnot(\tT),\pi_\to(\tT,\tF)) = \pi_\to(\tF,\tF) = \tT$.

In order to verify that this indeed yields a model, we must check the axiom application law for each non-syntax axiom (usually called ``logical axioms'' in this context). By our definition of $\eta$ given $\pi$, $\eta_\mu(\ang{c,T})$ is defined iff $\eta_\mu(\ang{\syn(c),T})\in\uv_{\syn(c)}$ (where $\syn(c)=\type(T)$); in this case this translates to $\eta_\mu(T)=\tT$ since $\syn(c)=\{\vdash\}$.

\begin{itemize}
  \item \texttt{ax-1}: $\eta_\mu(\vdash(\vph\to(\psi\to\vph))) = \pi_\to(\mu_\vph,\pi_\to(\mu_\psi,\mu_\vph)) = \tT$
  \item \texttt{ax-2}: $\pi_\to(\pi_\to(\mu_\vph,  \pi_\to(\mu_\psi,\mu_\chi)), \pi_\to(\pi_\to(\mu_\vph,\mu_\psi), \pi_\to(\mu_\vph,\mu_\chi))) = \tT$
  \item \texttt{ax-3}: $\pi_\to(\pi_\to( \pi_\lnot(\mu_\vph),\pi_\lnot(\mu_\psi)), \pi_\to(\mu_\psi,\mu_\vph)) = \tT$
  \item \texttt{ax-mp}: If $\mu_\vph$ and $\pi_\to(\mu_\vph,\mu_\psi)$ are true, then $\mu_\psi = \tT$.
\end{itemize}

In each case, there are a finite number of variables that range over $\{\tT,\tF\}$ (such as $\mu_\vph,\mu_\psi,\mu_\chi$ in the case of \texttt{ax-3}), so it suffices to verify that they are true under all combinations of assignments to the variables, i.e. truth table verification.

\subsection{MIU system}\label{sec:miu}
Let us solve the \href{https://en.wikipedia.org/wiki/MU_puzzle}{MU puzzle} by using a model. Hofstadter's MIU system \cite{hofstadter} is defined in Appendix D of the Metamath book \cite{metamath}, so we will just define the model itself. We have $\tc=\{\textrm{wff},\vdash\}$ and $\vt=\{\textrm{wff}\}$, and let $\syn(\vdash)=\textrm{wff}$ with $x,y$ variables of type \textrm{wff}. The axioms are:

\begin{itemize}
  \item \texttt{we}: $\textrm{wff}$
  \item \texttt{wM}: $\textrm{wff}\ x\tM$
  \item \texttt{wI}: $\textrm{wff}\ x\tI$
  \item \texttt{wU}: $\textrm{wff}\ x\tU$
  \item \texttt{ax}: $\vdash \tM\tI$
  \item \texttt{I\_}: $\vdash x\tI$ implies $\vdash x\tI\tU$
  \item \texttt{II}: $\vdash \tM x$ implies $\vdash \tM xx$
  \item \texttt{III}: $\vdash x\tI\tI\tI y$ implies $\vdash x\tU y$
  \item \texttt{IV}: $\vdash x\tU\tU y$ implies $\vdash xy$
\end{itemize}

The syntax axioms are $\{\texttt{we},\texttt{wM},\texttt{wI},\texttt{wU}\}$. Note that in \texttt{we}, there are no symbols after the typecode, so this says that the empty string is a valid wff. This formal system is weakly grammatical, but not grammatical, because wffs are built from the right, while axiom \texttt{II} contains the string $\vdash\tM x$, which cannot be parsed as a wff. If there was a syntax axiom ``$\textrm{wff}\ x y$'', then it would be grammatical, but not unambiguous.

In order to solve the MU puzzle, we build the relevant invariant, which is the number of $\texttt{I}$'s modulo $3$, into the model. Let $\uv_\textrm{wff}=\{0,1,2\}$, $\uv_\vdash=\{1,2\}$, let $x\fresh y$ be always true, and define $\eta_\mu(\textrm{wff}\ e)$ to be $\sum_i f(e_i)\bmod 3$, where $f(v)=\mu(v)$ if $v$ is a variable, $f(\tI)=1$, and $f(c)=0$ for other constants. Let $\eta_\mu(\vdash e)=\eta_\mu(\textrm{wff}\ e)$ when $\eta_\mu(\textrm{wff}\ e)\in\{1,2\}$. 

Verifying that this yields a model is then equivalent to verifying that the axioms preserve the invariant, and we can deduce that $\tM\tU$ is not provable because $\eta_\mu(\vdash\tM\tU)$ is not defined for any $\mu$.

\subsection{Set theory}\label{sec:setmm}
Of course, the more interesting case is the verification that the full structure of \texttt{set.mm} has a nontrivial model. As the background, we need a model of \textsf{ZFC} set theory; let this be $\ang{M,\ep}$.

The typecodes are $\tc=\{\mathrm{set},\mathrm{class},\mathrm{wff},\vdash\}$, with the only non-variable typecode being $\vdash$ and $\syn(\vdash)=\mathrm{wff}$. With these definitions \texttt{set.mm} becomes an unambiguous formal system. (Again, the proof of unambiguity is complex and not undertaken here.) Let $V=:\uv_\mathrm{set}$ be any infinite set.  This is the set of variables of the ``object language'' over which Metamath is understood to sit; they are customarily labeled $V=\{v_0,v_1,v_2,\dots\}$. Note that these are {\em not} actually variables in our sense, they are constants, elements of the set $V$. Set variables such as $\color{red} x$ in Metamath range over elements of $V$.  

We will need a few preliminary sets before properly defining $\uv_\mathrm{wff}$ and $\uv_\mathrm{class}$. Take $\uv'_\mathrm{wff}=(V\to M)\to\bool$, that is, the set of functions from $V\to M$ to $\bool$. The subset of $\uv_\mathrm{wff}$ corresponding to true formulas, $\uv_\vdash$, is the singleton $\{\lambda f\,\tT\}$ (i.e. the constant function true). Similarly, $\uv'_\mathrm{class}=(V\to M)\to\pow{M}$.

\begin{definition}
Given $A:(V\to M)\to B$ ($A$ is a wff or class variable) and $W\subseteq V$, we say {\em $A$ is constant outside $W$} if for all $f,g:V\to M$, if $f(v)=g(v)$ for all $v\in W$, then $A(f)=A(g)$.
\end{definition}

We define a relation $\fresh$ on the disjoint union $V\sqcup\uv'_\mathrm{wff}\sqcup\uv'_\mathrm{class}$:
\begin{itemize}
  \item If $x,y\in V$, then define $x\fresh y$ iff $x\ne y$.
  \item If $x\in V$ and $A\in\uv'_\mathrm{class}\sqcup\uv'_\mathrm{wff}$, then define $x\fresh A$ if $A$ is constant outside $V\setminus\{x\}$.
  \item The case $A\fresh x$ when $x\in V$ and $A\in\uv_\mathrm{class}\sqcup\uv_\mathrm{wff}$ is covered by symmetry; $x\fresh y$ is true for any other combination.
\end{itemize}

To define the real set $\uv_\mathrm{wff}$, we take the set of $A\in\uv'_\mathrm{wff}$ such that $A$ is ``effectively finite-dimensional'', that is, $A$ is constant outside some finite $V'\subseteq V$. Similarly, $\uv_\mathrm{class}$ is the set of effectively finite-dimensional $A\in\uv'_\mathrm{class}$. There is a minimal set $V'$ outside which $A$ is constant; this set is called ${\rm Free}(A)$. It immediately follows from the definition that $A\fresh x$ for $x\notin{\rm Free}(A)$. We can extend the definition slightly to set variables by taking ${\rm Free}(x)=\{x\}$ when $x\in V$.

\begin{theorem}\label{thm:fresh}
$\fresh$ as defined above is a freshness relation.
\end{theorem}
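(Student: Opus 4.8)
The plan is to verify directly the two defining conditions of a freshness relation for $\fresh$ on $\bigsqcup_{c\in\vt}\uv_c=V\sqcup\uv_\mathrm{wff}\sqcup\uv_\mathrm{class}$ (the $\vdash$-component is then supplied automatically by the $\syn$-extension described in \autoref{sec:treemodel}): first, that $\fresh$ is symmetric, and second, that for each variable typecode $c\in\{\mathrm{set},\mathrm{class},\mathrm{wff}\}$ and each finite $W\subseteq V\sqcup\uv_\mathrm{wff}\sqcup\uv_\mathrm{class}$ there is a $v\in\uv_c$ with $v\fresh w$ for all $w\in W$. Symmetry is immediate: each clause in the definition of $\fresh$ was stated symmetrically, with the $A\fresh x$ case explicitly declared to follow by symmetry. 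So everything reduces to the three existence claims.

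Two of these are essentially trivial. For $c=\mathrm{wff}$ take $v=\lambda f\,\tF$ and for $c=\mathrm{class}$ take $v=\lambda f\,\emptyset$; in each case $v$ is a constant function, hence constant outside $\emptyset$, hence effectively finite-dimensional with ${\rm Free}(v)=\emptyset$, so $v\in\uv_c$. For every $x\in V$ we have $x\notin{\rm Free}(v)$, so $v\fresh x$ by the observation just before the theorem; and for every $w\in W$ that lies in $\uv_\mathrm{wff}\sqcup\uv_\mathrm{class}$, $v\fresh w$ holds by the catch-all clause that $\fresh$ is true on any pair not involving a set variable. Hence $v$ is fresh from all of $W$.

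The only case with content is $c=\mathrm{set}$. Write $W_0=W\cap V$ and $W_1=W\cap(\uv_\mathrm{wff}\sqcup\uv_\mathrm{class})$; we need $x\in V$ with $x\ne y$ for all $y\in W_0$ and $x\fresh A$ for all $A\in W_1$. By definition $x\fresh A$ holds whenever $A$ is constant outside $V\setminus\{x\}$, which in turn holds whenever ${\rm Free}(A)\subseteq V\setminus\{x\}$ (``constant outside $W$'' clearly remains true when $W$ is enlarged), i.e. whenever $x\notin{\rm Free}(A)$. Since every $A\in W_1$ is effectively finite-dimensional, each ${\rm Free}(A)$ is finite, and as $W_1$ is finite the set $W_0\cup\bigcup_{A\in W_1}{\rm Free}(A)$ is a finite subset of $V$; because $V$ is infinite we may pick $x\in V$ outside it, and this $x$ is the required witness. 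The proof has no genuinely hard step; the only things to watch are that the freshness condition must be checked for all three variable typecodes, that for $\mathrm{wff}$ and $\mathrm{class}$ it is nearly vacuous because constant functions are fresh from everything, and that for $\mathrm{set}$ it rests precisely on effective finite-dimensionality, which keeps $\bigcup_{A\in W_1}{\rm Free}(A)$ finite so the pigeonhole against the infinitude of $V$ succeeds.
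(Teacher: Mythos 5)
Your proof is correct and follows essentially the same route as the paper's: symmetry is immediate from the definition, the $\mathrm{wff}$ and $\mathrm{class}$ cases are handled by constant functions (you pick $\lambda f\,\tF$ and $\lambda f\,\emptyset$ where the paper picks $\lambda f\,\tT$ and $\lambda f\,M$, which makes no difference), and the $\mathrm{set}$ case picks a variable outside the finite union of the ${\rm Free}$ sets of the elements of $W$, using effective finite-dimensionality and the infinitude of $V$. Your explicit split of $W$ into set-variable and $\mathrm{wff}/\mathrm{class}$ parts is just a slightly more spelled-out version of the paper's uniform use of ${\rm Free}$ (extended to set variables by ${\rm Free}(x)=\{x\}$).
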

\begin{proof}
Clearly $\fresh$ is a symmetric relation, so we need only verify that for every $c\in\{\mathrm{set},\mathrm{wff},\mathrm{class}\}$ and every finite set $W\subseteq\vr$, there is a $v\in\uv_c$ with $v\fresh W$. If $c=\mathrm{wff}$ then take $v=\lambda f\,\tT$, and if $c=\mathrm{class}$ then take $v=\lambda f\,M$; in each case $v\fresh w$ for any $w\in\bigsqcup\uv$, so the condition is satisfied.

If $c=\mathrm{set}$, then for each $w\in\uv_c$ the set ${\rm Free}(w)$ is finite, as is the union $\bigcup_{w\in W}{\rm Free}(w)$. Since $V$ is infinite by assumption, choose some $v\in\\ V\setminus\bigcup_{w\in W}{\rm Free}(w)$; then for $w\in W$, $v\notin{\rm Free}(w)$ implies $v\fresh w$.
\qed\end{proof}

\begin{definition}
For $f:V\to M$, $x\in V$ and $m\in M$, the function $f[x\to m]:V\to M$ is defined by $f[x\to m](y)=f(y)$ for $y\ne x$ and $f[x\to m](x)=m$.
\end{definition}

Finally, we define the $\pi_a$ functions. By definitional elimination, we can ignore definitional syntax axioms without loss of generality.

\begin{itemize}
  \item Take $\pi_\lnot(f)=f\circ\pi'_\lnot$, where $\pi'_\lnot$ is the function called $\pi_\lnot$ in \autoref{sec:prop}, and similarly for $\pi_\to(f)=f\circ\pi'_\to$.
  \item $\pi_\forall(x,\vph)$ is the wff corresponding to $\forall x,\vph$ and is defined so that $\pi_\forall(x,\vph)(f)$ iff $\vph(f[x\to m])$ for all $m\in M$. (Since it comes up often, the restricted quantifier $\forall m\in M$ will be abbreviated $\forall_M m$.)
  \item The class abstraction, \texttt{cab}: $\mathrm{class}\ \{x\mid\vph\}$, is defined so that $\pt{cab}(x,\vph)(f)=\{m\in M\mid\vph(f[x\to m])\}$.
  \item The set-to-class type conversion is a syntax axiom called \texttt{cv}: $\mathrm{class}\ x$. The function for this is defined so that $\pt{cv}(x)(f)=\{m\in M\mid m\ep f(x)\}$.
  \item Equality of classes is defined by \texttt{wceq}: $\mathrm{wff}\ A=B$, and is defined so that $\pi_=(A,B)(f)$ is true iff $A(f)=B(f)$.
  \item We define $\pi_\in(A,B)(f)$ true if $\exists_M m(A(f)=\{n \in M\mid n\ep m\}\land m\in B(f))$.
\end{itemize}

For the common case where one or both of the arguments to $=,\in$ are sets, we note that $\pi_\in(\pt{cv}(x),A)(f)$ iff $f(x)\in A(f)$, $\pi_\in(\pt{cv}(x),\pt{cv}(y))(f)$ iff $f(x)\ep f(y)$, and $\pi_=(\pt{cv}(x),\pt{cv}(y))(f)$ iff $f(x)=f(y)$. We need $\ang{M,\ep}$ to satisfy the extensionality axiom for this to work.

\begin{lemma}[The deduction theorem]\label{thm:ded}
$\pi_\to(\vph,\psi)(f)$ if and only if  $\vph(f)$ implies $\psi(f)$.
\end{lemma}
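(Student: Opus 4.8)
The plan is to unfold the definition of $\pi_\to$ in the \texttt{set.mm} model and reduce the claim to the two-row truth table for the propositional connective $\pi'_\to$ of \autoref{sec:prop}. Recall that $\pi_\to$ on wffs is defined argument-wise through $\pi'_\to$: for any $f:V\to M$ we have $\pi_\to(\vph,\psi)(f)=\pi'_\to(\vph(f),\psi(f))$, where $\vph(f),\psi(f)\in\bool$. Writing $a=\vph(f)$ and $b=\psi(f)$, and recalling that a bare assertion ``$\alpha$'' abbreviates ``$\alpha=\tT$'', the statement becomes: $\pi'_\to(a,b)=\tT$ iff $a=\tT$ implies $b=\tT$.

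I would then do the obvious case split on $a$. If $a=\tF$, the implication ``$a=\tT$ implies $b=\tT$'' holds vacuously, and $\pi'_\to(\tF,\tF)=\pi'_\to(\tF,\tT)=\tT$ shows the left-hand side holds as well, for either value of $b$. If $a=\tT$, the implication is equivalent to $b=\tT$, while $\pi'_\to(\tT,\tF)=\tF$ and $\pi'_\to(\tT,\tT)=\tT$ give $\pi'_\to(\tT,b)=\tT\iff b=\tT$. In both cases the two sides of the biconditional agree, which proves the lemma.

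There is essentially no obstacle; the content is a single truth-table verification, and the only thing to be careful about is the bookkeeping that $\pi_\to$ (and $\pi_\lnot$) act pointwise in the valuation argument, so that purely propositional identities like this one transfer verbatim from the model of \autoref{sec:prop}. This same unfolding is what will make the later connective-and-quantifier manipulations (for $\pi_\forall$, $\pi_=$, $\pi_\in$) routine, which is why it is convenient to record it here as a named lemma.
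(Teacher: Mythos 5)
Your proof is correct and is essentially the paper's argument: both reduce the claim to the pointwise definition $\pi_\to(\vph,\psi)(f)=\pi'_\to(\vph(f),\psi(f))$ and verify the two-valued truth table of $\pi'_\to$ (the paper phrases one direction as a contradiction via the single false row $(\tT,\tF)$ and cites \texttt{ax-mp} for the converse, but the content is the same case check).
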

\begin{proof}
Suppose not. Then $\pi_\to(\vph,\psi)(f)=\tF$, which by definition implies $\vph(f)=\tT$ and $\psi(f)=\tF$, a contradiction. The converse is just \texttt{ax-mp} (verified by truth tables).
\qed\end{proof}

\begin{lemma}[The non-free predicate]\label{thm:nf}
$x\fresh\vph$ iff $\pi_\to(\vph,\pi_\forall(x,\vph))(f)$ is true for all $f$ (which is also equivalent to $\eta_\mu(\vdash(\vph'\to\forall x'\vph'))$ being defined, where $\mu(\vph')=\vph$ and $\mu(x')=x$).
\end{lemma}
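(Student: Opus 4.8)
The plan is to strip both sides down to elementary statements about the function $\vph\colon(V\to M)\to\bool$ and then verify the resulting equivalence by a short case split. First I would rewrite the right-hand side: by the deduction theorem (\autoref{thm:ded}), $\pi_\to(\vph,\pi_\forall(x,\vph))(f)$ is true for all $f$ exactly when, for every $f$, $\vph(f)$ implies $\pi_\forall(x,\vph)(f)$; and by the definition of $\pi_\forall$, the latter says $\vph(f[x\to m])$ holds for all $m\in M$. So the right-hand side becomes: for all $f$ and all $m$, if $\vph(f)$ then $\vph(f[x\to m])$. On the other side, I would observe that any $g$ agreeing with $f$ off $\{x\}$ is literally $f[x\to g(x)]$, so $x\fresh\vph$ (i.e. $\vph$ is constant outside $V\setminus\{x\}$) is equivalent to: $\vph(f)=\vph(f[x\to m])$ for all $f$ and all $m$.

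With these reformulations the forward direction is immediate: if $\vph$ does not depend on the value at $x$, then $\vph(f)$ and $\vph(f[x\to m])$ always agree, so the conditional on the right-hand side holds (vacuously when $\vph(f)=\tF$, and because the values agree when $\vph(f)=\tT$).

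The backward direction is the one subtle point, because the right-hand side only asserts an implication (from $\vph(f)$ to $\vph(f[x\to m])$), whereas $x\fresh\vph$ asserts an equality. To recover the equality I would fix $f$ and $m$ and split on the truth value of $\vph(f)$. If $\vph(f)=\tT$, the hypothesis applied to $f$ gives $\vph(f[x\to m])=\tT$, hence equality. If $\vph(f)=\tF$, suppose for contradiction that $\vph(f[x\to m])=\tT$; applying the hypothesis to $g:=f[x\to m]$ and the value $f(x)$ gives $\vph(g[x\to f(x)])=\tT$, but $g[x\to f(x)]=f$ since $g$ and $f$ differ only at $x$, so $\vph(f)=\tT$, a contradiction. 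Thus $\vph(f)=\vph(f[x\to m])$ in all cases, which is $x\fresh\vph$.

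Finally, for the parenthetical reformulation I would compute $\eta_\mu$ on the syntax tree of $\vph'\to\forall x'\,\vph'$: using the variable application law ($\eta_\mu(\vh_{\vph'})=\mu(\vph')=\vph$ and $\eta_\mu(\vh_{x'})=\mu(x')=x$) and the recursive clauses for $\pi_\forall$ and $\pi_\to$, one gets $\eta_\mu(\mathrm{wff}\ (\vph'\to\forall x'\,\vph'))=\pi_\to(\vph,\pi_\forall(x,\vph))$. By the definition of models of grammatical formal systems, $\eta_\mu(\vdash(\vph'\to\forall x'\,\vph'))$ is defined iff this element lies in $\uv_\vdash=\{\lambda f\,\tT\}$, i.e. iff $\pi_\to(\vph,\pi_\forall(x,\vph))(f)=\tT$ for all $f$ — exactly the right-hand side already analyzed. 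I expect no real obstacle here beyond bookkeeping the $\tF$ case in the backward direction, where the implication must be symmetrized into an equality.
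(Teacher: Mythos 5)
Your proof is correct and takes essentially the same route as the paper: unfold the right-hand side via \autoref{thm:ded} and the definition of $\pi_\forall$, observe the forward direction is immediate, and recover the equality in the backward direction by applying the hypothesis at the shifted assignment $g=f[x\to m]$ (the paper phrases this step as a symmetry/``say $\vph(f)$ is true'' argument rather than your proof by contradiction, but it is the same idea). Your explicit verification of the parenthetical $\eta_\mu$ reformulation is a harmless addition that the paper leaves implicit.
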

\begin{proof}
By definition, $x\fresh\vph$ iff for all $f,g:V\to M$, $f(v)=g(v)$ for all $v\ne x$ implies $\vph(f)=\vph(g)$.  In this case, given $f$, and using the deduction theorem, if $\vph(f)$, then since $f[x\to m]$ differs from $f$ only at $x$,  $\vph(f[x\to m])=\vph(f)$ is true for each $m$, so $\pi_\forall(x,\vph)(f)$. Thus $\pi_\to(\vph,\pi_\forall(x,\vph))(f)$ by \autoref{thm:ded}. Conversely, if $f,g$ differ only for $v=x$, either $\vph(f)=\vph(g)=\tF$, or one of them (say $\vph(f)$) is true. Then by \texttt{ax-mp}, $\pi_\forall(x,\vph)(f)$, so taking $m=g(x)$, $\vph(f[x\to g(x)])=\vph(g)$ is true. Hence $\vph(f)=\vph(g)$, so $x\fresh\vph$.
\qed\end{proof}

\begin{theorem}\label{thm:setmodel}
The tuple $\ang{\uv,\fresh,\eta}$ defined via the above construction is a model for the {\normalfont\texttt{set.mm}} formal system.
\end{theorem}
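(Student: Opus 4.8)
The plan is to invoke \autoref{thm:factor}. The construction above specifies exactly the data of a model $\ang{\uv,\fresh,\pi}$ of the tree formal system associated to \texttt{set.mm} (with $\pi$ the family $\pi_\lnot,\pi_\to,\pi_\forall,\pt{cab},\pt{cv},\pi_=,\pi_\in$ after definitional elimination, and $\eta$ the function they generate), so it suffices to check that this triple really is such a model; \autoref{thm:factor} then yields that the associated $\ang{\uv,\fresh,\eta}$ is a model in the sense of \autoref{def:gmodel}, which by the isomorphism of \autoref{sec:trees} is a model of \texttt{set.mm}. That $\fresh$ is a freshness relation is \autoref{thm:fresh}, and $\uv_\vdash=\{\lambda f\,\tT\}\subseteq\uv_\mathrm{wff}$ is immediate (and $\lambda f\,\tT$ is fresh with everything, so extending $\fresh$ to the typecode $\vdash$ causes no trouble), so there are three obligations: (a) each $\pi_a$ really maps $\prod_i\uv_{\type(v^a_i)}$ into $\uv_{\type(a)}$ — the content being that its value is again effectively finite-dimensional; (b) the freshness-substitution law for $\pi$; and (c) the axiom-application law for every logical axiom of \texttt{set.mm}.

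For (a), I would compute an explicit bound on $\mathrm{Free}$ of each syntactic function's value: $\mathrm{Free}(\pi_\lnot(\vph))\subseteq\mathrm{Free}(\vph)$; $\mathrm{Free}(\pi_\to(\vph,\psi))\subseteq\mathrm{Free}(\vph)\cup\mathrm{Free}(\psi)$; $\mathrm{Free}(\pi_\forall(x,\vph))$ and $\mathrm{Free}(\pt{cab}(x,\vph))\subseteq\mathrm{Free}(\vph)\setminus\{x\}$; $\mathrm{Free}(\pt{cv}(x))=\{x\}$; and $\mathrm{Free}(\pi_=(A,B)),\mathrm{Free}(\pi_\in(A,B))\subseteq\mathrm{Free}(A)\cup\mathrm{Free}(B)$. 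Each follows in one line from the defining equation together with the fact that the inputs depend only on $f$ restricted to their $\mathrm{Free}$ sets; in particular every value is effectively finite-dimensional, giving (a). Given these bounds, (b) is automatic: if $v\fresh f_i$ for every $i$ and $v\in V$, then $v\notin\bigcup_i\mathrm{Free}(f_i)$, hence $v\notin\mathrm{Free}(\pi_a(f))$ by the bound, hence $v\fresh\pi_a(f)$; and if $v\notin V$ then $v\fresh w$ for every $w$ by definition of $\fresh$.

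The work is in (c). I would first isolate a \emph{truth lemma}: for any wff built only from $\lnot,\to,\forall$ and the atoms $x=y$, $x\in y$ with $x,y$ set variables (coerced by \texttt{cv}), the interpretation of its $\vdash$-form under $\mu$ is defined iff the corresponding first-order sentence over $\{\ep\}$ holds in $\ang{M,\ep}$ under the assignment $v\mapsto\mu(v)$; this is proved by induction using \autoref{thm:ded}, the definition of $\pi_\forall$ via $f[x\to m]$, and the recorded facts $\pi_=(\pt{cv}(x),\pt{cv}(y))(f)\leftrightarrow f(x)=f(y)$ and $\pi_\in(\pt{cv}(x),\pt{cv}(y))(f)\leftrightarrow f(x)\ep f(y)$ (the first using extensionality in $M$). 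The axioms then split into four groups. The propositional axioms \texttt{ax-1}, \texttt{ax-2}, \texttt{ax-3}, \texttt{ax-mp} are truth-table checks exactly as in \autoref{sec:prop}. The pure predicate-calculus axioms are handled by unwinding $\pi_\forall$: \texttt{ax-gen} and the ``not-free'' axiom $\vdash(\vph\to\forall x\,\vph)$ are \autoref{thm:nf}, and the equality axioms and the Tarski-style quantifier/substitution axioms (\texttt{ax-6} through \texttt{ax-13}) reduce to elementary identities for $f[x\to m]$, with a case split on whether the relevant \emph{object} variables $\mu(x),\mu(y),\dots$ coincide, there being no distinct-variable hypotheses forcing them apart. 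The class axioms \texttt{df-clab}, \texttt{df-cleq}, \texttt{df-clel} (and \texttt{df-sb}, unless it is removed by the definitional elimination used to set up the $\pi_a$) unwind directly from the definitions of $\pt{cab},\pt{cv},\pi_=,\pi_\in$ and extensionality in $M$. Finally, each set-theoretic axiom \texttt{ax-ext}, \texttt{ax-rep}, \texttt{ax-pow}, \texttt{ax-un}, \texttt{ax-reg}, \texttt{ax-inf}, \texttt{ax-ac} becomes, by the truth lemma, (a universal closure of) the corresponding \textsf{ZFC} axiom, and so holds because $\ang{M,\ep}\models\textsf{ZFC}$.

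The genuine obstacle is \texttt{ax-rep} (and, derived from it, separation). Unlike the other \textsf{ZFC} axioms it carries a wff metavariable $\vph$ with no distinct-variable restriction, so in the model $\mu(\vph)$ ranges over \emph{all} effectively finite-dimensional members of $\uv_\mathrm{wff}$ — in effect over arbitrary finitary predicates on $M$, not merely those first-order definable over $\ang{M,\ep}$. Verifying \texttt{ax-rep} therefore genuinely requires that replacement hold in $\ang{M,\ep}$ for such external predicates; this is the precise point at which the set-theoretic strength of the hypothesis is consumed, and the point to be careful about when choosing $M$ (e.g.\ taking $M=V$, or a model satisfying replacement in the second-order sense). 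Beyond this, the remaining difficulty is purely bookkeeping: no single predicate-calculus axiom is hard, but the variable-coincidence case analyses for the \texttt{ax-12}/\texttt{ax-13}-style axioms are the most error-prone routine part.
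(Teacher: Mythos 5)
Your proposal is correct and follows essentially the same route as the paper: verify the axiom-application law axiom by axiom --- truth tables for the propositional axioms, unwinding $f[x\to m]$ with \autoref{thm:ded} and \autoref{thm:nf} for the predicate-calculus axioms, direct computation for the class axioms, and factoring the set-theoretic axioms through $\ang{M,\ep}$ --- with the tree-model bookkeeping (effective finite-dimensionality of the $\pi_a$ values and the freshness-substitution law) left implicit in the paper but spelled out by you. Your caveat about \texttt{ax-rep} matches, and in fact sharpens, the paper's treatment: the paper states its equivalent form quantified over \emph{all} binary relations $\vph\subseteq M^2$ and tacitly assumes $\ang{M,\ep}$ satisfies this, which, as you note, is stronger than first-order replacement and is exactly where the choice of $M$ matters.
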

\begin{proof}
As in the baby example for propositional calculus, we must verify that $\eta$ honors all the logical axioms. The tricky ones are the predicate calculus axioms:

\begin{itemize}
  \item \texttt{ax-1,ax-2,ax-3,ax-mp}: Verification by truth tables, as in the propositional calculus example
  
  \item \texttt{ax-5}: $\vdash(\forall x(\vph\to\psi)\to(\forall x\vph\to\forall x\psi))$\\
  Assume $\pi_\forall(x,\pi_\to(\vph,\psi))(f)$ and $\pi_\forall(x,\vph)(f)$; then $\vph(f[x\to m])$ and\\ $\pi_\to(\vph,\psi)(f[x\to m])$, so by \texttt{ax-mp}, $\psi(f[x\to m])$. Conclude by \autoref{thm:ded}.
  \item \texttt{ax-6}: $\vdash(\lnot\forall x\vph\to\forall x\lnot\forall x\vph)$\\
  By \autoref{thm:nf}, this is equivalent to $x\fresh\pi_\lnot(\pi_\forall(x,\vph))$.
  If $f,g$ differ only at $x$, then $\pi_\forall(x,\vph)(f)$ if $\vph(f[x\to m])$ for all $m$; but $f[x\to m]=g[x\to m]$, so $\pi_\forall(x,\vph)(f)=\pi_\forall(x,\vph)(g)$ and $\pi_\lnot(\pi_\forall(x,\vph)(f)) = \pi_\lnot(\pi_\forall(x,\vph)(g))$.
  \item \texttt{ax-7}: $\vdash(\forall x\forall y\vph\to\forall y\forall x\vph)$\\
  By \autoref{thm:ded}, assume $\pi_\forall(x,\pi_\forall(y,\vph))(f)$. If $x=y$ then this is the same as $\pi_\forall(y,\pi_\forall(x,\vph))(f)$, otherwise it reduces to $\vph(f[x\to m][y\to n])$ for all $m,n\in M$, and note that $f[x\to m][y\to n]=f[y\to n][x\to m]$.
  \item \texttt{ax-gen}: $\vdash\vph$ implies $\vdash\forall x\vph$\\
  If $\vph(f)$ for all $f$, then set $f:=g[x\to m]$ to deduce $\vph(g[x\to m])$ for all $m$, hence $\pi_\forall(x,\vph)(g)$.
  \item \texttt{ax-8}: $\vdash(x=y\to(x=z\to y=z))$\\
  By \autoref{thm:ded}, assume $\pi_=(x,y)$ and $\pi_=(y,z)$. Then $f(x)=f(y)=f(z)$, so $\pi_=(x,z)$.
  \item \texttt{ax-9}: $\vdash\lnot\forall x\lnot x=y$\\
  This is equivalent to $\exists m\in M$, $\pi_=(x,y)(f[x\to m])$, or $\exists m\in M,\ m=f[x\to m](y)$. If $x=y$, then any $m\in M$ will do ($M$ is assumed nonempty because it is a model of \textsf{ZFC}), and if $x\ne y$ then take $m=f(y)$.
  \item \texttt{ax-11}: $\vdash(x=y\to(\forall y\vph\to\forall x(x=y\to\vph)))$\\
  Assume $f(x)=f(y)$ and $\forall_M n, \vph(f[y\to n])$, take some $m\in M$, and assume $f[x\to m](x)=f[x\to m](y)$. We want to show $\vph(f[x\to m])$. If $x=y$, then the second hypothesis implies $\vph(f[y\to m])=\vph(f[x\to m])$. Otherwise, $m=f(y)=f(x)$, so $\vph(f[x\to m])=\vph(f)=\vph(f[y\to m])$.
  \item \texttt{ax-12}: $\vdash(\lnot x=y\to(y=z\to\forall x\,y=z))$\\
  Assume $f(x)\ne f(y)=f(z)$, and take $m\in M$. We want to show $f[x\to m](y)=f[x\to m](z)$. If $x=y$ or $x=z$ then $f(x)=f(y)$ or $f(x)=f(z)$, a contradiction, so $f[x\to m](y)=f(y)=f(z)=f[x\to m](z)$.
  \item \texttt{ax-13}: $\vdash(x=y\to(x\in z\to y\in z))$\\
  Assume $f(x)=f(y)$ and $f(x)\ep f(z)$; then $f(y)\ep f(z)$.
  \item \texttt{ax-14}: $\vdash(x=y\to(z\in x\to z\in y))$\\
  Assume $f(x)=f(y)$ and $f(z)\ep f(x)$; then $f(z)\ep f(y)$.
  \item \texttt{ax-17}: $x,\vph$ distinct implies $\vdash(\vph\to\forall x\,\vph)$\\
  This is just the forward direction of \autoref{thm:nf}.
\end{itemize}

We can also verify the class axioms:
\begin{itemize}
  \item \texttt{df-clab}: The left hand expression $x\in\{y\mid\vph\}$ expands to $f(x)\in\{m\in M\mid\vph(f[y\to m])\}$, that is, $\vph(f[y\to f(x)])$, while the right side says $f(y)=f(x)\to\vph(f)$ and $\exists_M m,(f[y\to m](x)=m\land\vph(f[y\to m]))$. If $x=y$, the left conjunct becomes $\vph(f)$ and the right becomes $\exists_M m,\vph(f[x\to m])$, which is provable from the other conjunct by setting $m=f(x)$ so that $f[x\to m]=f$. At the same time the left expression also reduces to $\vph(f[x\to f(x)])=\vph(f)$. If $x\ne y$, then $f[y\to m](x)=f(x)$ and the right conjunct becomes $\vph(f[y\to f(x)])$, and the left conjunct is provable from this since $f(y)=f(x)$ implies $f[y\to f(x)]=f[y\to f(y)]=f$, so both sides are equivalent to $\vph(f[y\to f(x)])$.
  \item \texttt{df-clel}:
  We want to show that $\pi_\in(A,B)(f)$ iff there is an $m$ such that $\{n\mid n\ep m\}=A(f[x\to m])$ and $m\in B(f[x\to m])$, which matches our definition after replacing $A(f[x\to m])=A(f)$ and $B(f[x\to m])=B(f)$, since $x\fresh A$ and $x\fresh B$.
  \item \texttt{df-cleq}:
  (This has an extra hypothesis \texttt{ax-ext} which is already built into our model.) We want to show that $\pi_=(A,B)(f)$ iff for all $m$, $m\in A(f[x\to m])\leftrightarrow m\in B(f[x\to m])$. We are also assuming $x\fresh A$ and $x\fresh B$ in this axiom, so this reduces to $m\in A(f)\leftrightarrow m\in B(f)$, or (using extensionality in the metalanguage) $A(f)=B(f)$, which is the definition of $\pi_=(A,B)(f)$.
\end{itemize}

The ``true'' axioms of set theory are all phrased in terms of only $=,\in$, and so factor straight through to axioms in $\ang{M,\ep}$:
\begin{itemize}
  \item \texttt{ax-ext} (Axiom of Extensionality): The original expression is \[\pi_\to(\pi_\forall(z,\pt{wb}(\pi_\in(z,x),\pi_\in(z,y))),\pi_=(x,y))(f),\]
  which simplifies, according to the definitions, to $\forall_M m(m\ep f(x)\leftrightarrow m\ep f(y))\to f(x)=f(y)$, for all $f$. With a change of variables this is equivalent to \[\forall_M x\forall_M y\forall_M z(z\ep x\leftrightarrow z\ep y)\to x=y,\] exactly the same as the original universally quantified Metamath formula, but with $\ep$ in place of $\in$ and $\forall_M$ instead of $\forall$. Since the other axiom expressions are long and the process is similar, I will only quote the final equivalent form after reduction.
  \item \texttt{ax-pow} (Axiom of Power sets): Equivalent to:
  \[\forall_M x\exists_M y\forall_M z(\forall_M w(w\ep z\to w\ep x)\to z\ep y)\]
  \item \texttt{ax-un} (Axiom of Union): Equivalent to:
  \[\forall_M x\exists_M y\forall_M z(\exists_M w(z\ep w\land w\ep x)\to z\ep y)\]
  \item \texttt{ax-reg} (Axiom of Regularity): Equivalent to:
  \[\forall_M x(\exists_M y,y\ep x\to\exists_M y(y\ep x\land\forall_M z(z\ep y\to\lnot z\ep x)))\]
  \item \texttt{ax-inf} (Axiom of Infinity): Equivalent to:
  \[\forall_M x\exists_M y(x\ep y\land\forall_M z(z\ep y\to\exists_M w(z\ep w\land w\ep y)))\]
  \item \texttt{ax-ac} (Axiom of Choice): Equivalent to:
  \begin{multline*}
  \forall_M x\exists_M y\forall_M z\forall_M w((z\ep w\land w\ep x)\to\\\exists_M v\forall_M u(\exists_M t(u\ep w\land w\ep t\land u\ep t\land t\ep y)\leftrightarrow u=v))
  \end{multline*}
  \item \texttt{ax-rep}:
  This one is more complicated than the others because it contains a $\mathrm{wff}$ metavariable. It is equivalent to: for all binary relations $\vph\subseteq M^2$:
  \[\forall_M w\exists_M y\forall_M z(\vph(w,z)\to z=y)\to\exists_M y\forall_M z(z\ep y\leftrightarrow\exists_M w(w\ep x\land\vph(w,z))).\]
  To show the Metamath form of the axiom from this one, given $\vph'\in\uv_\mathrm{wff}$ and $f:V\to M$, define $\vph(w,z)\leftrightarrow\forall_M t,\vph'(f[w'\to w][y'\to t][z'\to z])$, and apply the stated form of the axiom.
\end{itemize}
\qed\end{proof}

Thus if \textsf{ZFC} has a model, so does \texttt{set.mm}.

\section{Applications of Metamath models}\label{sec:application}
\subsection{Independence proofs}\label{sec:independence}
We conclude with a few applications of the ``model'' concept. The primary application of a model is for showing that statements are not provable, because any provable statement must be true in the model. (The converse is not usually true.)  This extends to showing that a system is consistent, because any nontrivial model has unprovable statements (and in a logic containing the principle of explosion $\vdash(\vph\to(\lnot\vph\to\psi))$, this implies that there is no provable statement whose negation is also provable). But it can also be applied for independence proofs, by constructing a (necessarily nonstandard) model of all statements except the target statement.

For an easy example, if we change the definition of our model of propositional calculus so that instead $\pi_\lnot(\tT)=\pi_\lnot(\tF)=\tT$, we would have a new model that still satisfies \texttt{ax-1}, \texttt{ax-2}, and \texttt{ax-mp} (because they do not involve $\lnot$), but violates \texttt{ax-3}. If we take $\mu_\vph=\tF$ and $\mu_\psi=\tT$, we get
\begin{align*}
\eta_\mu(\textrm{wff}\ ((\lnot\vph\to\lnot\psi)\to(\psi\to\phi)))
&=\pi_\to(\pi_\to( \pi_\lnot(\mu_\vph),\pi_\lnot(\mu_\psi)), \pi_\to(\mu_\psi,\mu_\vph))\\
&=\pi_\to(\pi_\to(\tT,\tT), \pi_\to(\tT,\tF))\\
&=\pi_\to(\tT, \tF)=\tF,
\end{align*}
so $\eta_\mu(\vdash((\lnot\vph\to\lnot\psi)\to(\psi\to\phi)))$ is not defined. Thus this shows that \texttt{ax-3} is not provable from \texttt{ax-1}, \texttt{ax-2}, \texttt{ax-mp} and the syntax axioms alone (although this should not come as a surprise since none of the other axioms use the $\lnot$ symbol).

\subsection{G\"{o}del's completeness theorem}\label{sec:godel}
One important construction that can be done for any arbitrary (string-based) model is to use a formal system as a model of itself. This model will have the property that the theorems (with no hypotheses) are the only statements that are true in the model, leading to an analogue of G\"{o}del's completeness theorem for statements with no hypotheses and all variables distinct. It is also the ``original'' model of Metamath, from which the terminology ``disjoint variable condition'' and the ``meta'' in Metamath are derived.

Given a formal system $\ang{\cn,\vr,\type,\Gamma}$, choose some set $\vr'$, with types for each variable, such that $\{v\in\vr'\mid\type(v)=c\}$ is infinite for each $c\in\vt$. (It is possible to use $\vr'=\vr$ here, provided that $\vr$ satisfies this condition, but it is also helpful to distinguish the two ``levels'' of variable in the contruction.) Using $\cn'=\cn$, define $\ex'$ analogously with the new sets. We will call formulas in $\ex'$ the ``object level'' and those in $\ex$ the ``meta level''.

We also define a substitution $\sigma:\ex\to\ex'$ in the same way as \hyperref[def:subst]{\autoref*{sec:recap}.\ref*{def:subst}}. Here variables of the meta level are substituted with expressions in the object level.

We can build another formal system at the object level, where $\ang{D',H',A'}\in\Gamma'$ if there is some $\ang{D,H,A}\in\Gamma$ and a substitution $\sigma:\ex\to\ex'$ such that $\forall v\in\vr,\sigma(v)\in\vr'$ ($\sigma$ substitutes variables for variables) and $\sigma(v)\ne\sigma(w)$ for each $\{v,w\}\in D$, and $D'=\dv'$ (all variables are distinct), $H'=\sigma(H)$ and $A'=\sigma(A)$. This new formal system differs from the original one only in having a different set of variables.

\begin{theorem}\label{thm:godmod}
Let $A\in\uv_c$ if there is some theorem $\ang{D,\emptyset,A}$ in the object level formal system with $\type(A)=c$, define $e\fresh e'$ when ${\cal V}(e)\cap{\cal V}(e')=\emptyset$, and let $\eta_\mu$ be the unique substitution $\ex\to\ex'$ satisfying $\eta_\mu(\vh_v)=\mu(v)$, restricted to the $e$ such that $\eta_\mu(e)\in\uv_{\type(e)}$. Then $\ang{\uv,\fresh,\eta}$ is a model for the meta level formal system $\ang{\cn,\vr,\type,\Gamma}$.
\end{theorem}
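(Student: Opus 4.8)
The plan is to verify, one by one, the requirements of \autoref{def:model} for $\ang{\uv,\fresh,\eta}$ (the meta system being an arbitrary string-based system, this is the relevant notion of model, not \autoref{def:gmodel}). The observation used throughout is that $\eta_\mu$ is nothing but the total substitution $\ex\to\ex'$ sending each $\vh_v$ to $\mu(v)$, restricted to those $e$ with $\eta_\mu(e)\in\uv_{\type(e)}$; so ``$\eta_\mu(e)$ is defined'' means exactly that the object expression $\eta_\mu(e)$ is a theorem with empty hypothesis list in the object system. First one checks that $\fresh$ really is a freshness relation: symmetry is immediate, and given $c\in\vt$ and a finite $W\subseteq\bigsqcup\uv$, the set $\bigcup_{w\in W}{\cal V}(w)$ is finite, so -- since each variable-typecode class of $\vr'$ is infinite by hypothesis -- one can pick an object variable $v'$ outside it; its variable hypothesis $\vh_{v'}$ is trivially a theorem, hence $\vh_{v'}\in\uv_c$, and ${\cal V}(\vh_{v'})=\{v'\}$ is disjoint from every ${\cal V}(w)$.

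Next the routine clauses. Type correctness and the variable-application law hold by construction. For the substitution law, given a meta substitution $\sigma$, the composite $\eta_\mu\circ\sigma\colon\ex\to\ex'$ is again a substitution, and at $\vh_v$ it takes the value $\eta_\mu(\sigma(\vh_v))=\sigma(\mu)(v)$; so $\eta_\mu\circ\sigma=\eta_{\sigma(\mu)}$ as total maps, and since substitution preserves typecodes the two partial restrictions have the same domain, giving the ``defined iff defined'' equality. Dependence on present variables holds because a substitution reads $\mu$ only at the variables occurring in $e$. For freshness substitution, use ${\cal V}(\eta_\mu(e))=\bigcup_{v\in{\cal V}(e)}{\cal V}(\mu(v))$: an object expression disjoint from each ${\cal V}(\mu(v))$ with $v\in{\cal V}(e)$ is disjoint from ${\cal V}(\eta_\mu(e))$.

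The substance is the axiom-application law. Fix $\ang{D,H,A}\in\Gamma$ and $\mu\in\vl$ with $\mu(\alpha)\fresh\mu(\beta)$ for all $\{\alpha,\beta\}\in D$ and with $\eta_\mu(h)$ defined -- i.e. $\mu(h)\in\uv_{\type(h)}$, so $\mu(h)$ is provable with empty hypotheses in the object system -- for all $h\in H$; the goal is to put $\eta_\mu(A)=\mu(A)$ into the closure of $\emptyset$ relative to the maximal distinct-variable set $\dv'$ (legitimate because provability is monotone in the ambient distinct-variable set and $\uv_c$ asks only for \emph{some} witnessing set, so this places $\mu(A)$ in $\uv_{\type(A)}$). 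To do this I would invoke the object-level clone of $\ang{D,H,A}$ sitting in $\Gamma'$: choose a renaming $\sigma\colon\vr\to\vr'$ injective on the finite set ${\cal V}(H\cup\{A\})$ (possible since each variable-typecode class of $\vr'$ is infinite), so that the reduct of $\ang{\dv',\sigma(H),\sigma(A)}$ is one of the statements of $\Gamma'$, and apply it via the substitution $\tau$ with $\tau(\vh_{\sigma(v)})=\mu(v)$ for $v\in{\cal V}(H\cup\{A\})$ and $\tau$ the identity elsewhere. Then $\tau(\sigma(h))=\mu(h)$ is already in the closure by hypothesis; the variable-hypothesis premises hold because every $\vh_{v'}$ and every $\mu(v)$ is a theorem; and the distinct-variable premises of the clone amount to disjointness conditions among the sets ${\cal V}(\mu(v))$, which the freshness hypotheses $\mu(\alpha)\fresh\mu(\beta)$ are there to supply. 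Hence $\tau(\sigma(A))=\mu(A)$ lies in the closure and $\eta_\mu(A)$ is defined. (Once the model is established, \autoref{thm:sound} upgrades the axiom-application law to all theorems automatically, though that is not needed here.)

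I expect the distinct-variable bookkeeping in the last step to be the main obstacle: one must line up the ``all variables distinct'' discipline of the object-level clones with the freshness data available on $\mu$, keeping track of how the reduct trims the clone's distinct-variable set to the occurring variables, of exactly which meta variables the renaming $\sigma$ may collapse versus must keep apart, and of the interplay between the ambient distinct-variable set chosen for the target and the one the clone demands. This is also precisely where the restriction of the construction -- to statements with no hypotheses, and in the completeness corollary to statements with all variables distinct -- enters, as it is what makes the object-level and meta-level distinct-variable disciplines compatible. Everything else reduces to routine manipulation of substitutions and of the occurring-variables operator ${\cal V}$.
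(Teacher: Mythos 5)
Your handling of the routine clauses (that $\fresh$ is a freshness relation, type correctness, variable application, the substitution law via $\eta_\mu\circ\sigma=\eta_{\sigma(\mu)}$, dependence on present variables, and freshness substitution via ${\cal V}(\eta_\mu(e))\subseteq\bigcup_{v\in{\cal V}(e)}{\cal V}(\mu(v))$) is correct and is essentially the paper's own argument. The genuine gap is in the axiom-application step, and it is not mere bookkeeping: the route you sketch does not go through. You apply an object-level clone carrying the ``all variables distinct'' condition, with $\sigma$ injective on ${\cal V}(H\cup\{A\})$; after the reduct, that clone's distinct-variable premises demand ${\cal V}(\mu(v))\cap{\cal V}(\mu(w))=\emptyset$ for \emph{every} pair of distinct metavariables occurring in $\ang{D,H,A}$, whereas your hypotheses supply $\mu(\alpha)\fresh\mu(\beta)$ only for $\{\alpha,\beta\}\in D$, and for typical axioms $D$ is far smaller than the set of all pairs. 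Concretely, for the syntax axiom \texttt{wi} of \autoref{sec:prop} (where $D=\emptyset$) take $\mu(\vph)=$ ``$\mathrm{wff}\ p$'' and $\mu(\psi)=$ ``$\mathrm{wff}\ (q\to p)$'': these share the variable $p$, so an uncollapsed clone (which needs variable-disjoint values) and a collapsed clone (which forces equal values) are both blocked, and since wff-typed conclusions can only come from the \texttt{wn}/\texttt{wi} clones, no longer derivation rescues ``$\mathrm{wff}\ (p\to(q\to p))$'' under the all-pairs discipline either. So the disjointness you say the freshness hypotheses ``are there to supply'' is simply not available, and the central step is left open (as your closing paragraph in effect concedes).

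The repair---and what the paper's one-sentence argument is implicitly using when it says the law ``translates directly to the induction step of closure'' and that the object-level system ``differs from the original one only in having a different set of variables''---is to give the clone of $\ang{D,H,A}$ the distinct-variable set $\{\{\sigma(\alpha),\sigma(\beta)\}\mid\{\alpha,\beta\}\in D\}$, i.e.\ the renamed copy of the original conditions, rather than all pairs. With that reading your construction works verbatim: choose $\sigma$ injective, let $\tau$ satisfy $\tau\circ\sigma=\mu$ on the relevant variable hypotheses, note (as you correctly did) that every $\mu(h)$ and every $\mu(v)$ lies in the closure of $\emptyset$ relative to $\dv'$ by monotonicity of the closure in the ambient distinct-variable set, and observe that the closure step's distinct-variable side condition then asks exactly for ${\cal V}(\mu(\alpha))\cap{\cal V}(\mu(\beta))=\emptyset$ with $\{\alpha,\beta\}\in D$, which is precisely the freshness hypothesis. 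Also note that the restriction to statements with no hypotheses and all variables distinct is what makes the completeness corollary work; it plays no role in discharging the axiom-application law, contrary to the suggestion in your last paragraph.
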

\begin{proof}\leavevmode
\begin{itemize}
  \item The type correctness and variable application laws are true by definition, and substitution and dependence on present variables are a consequence of properties of substitutions.
  \item The relation $\fresh$ is a freshness relation because the finite set $\bigcup_{e\in W}{\cal V}(e)$ misses some variable in each type.
  \item The freshness substitution rule says that if ${\cal V}(w)\cap{\cal V}(\mu(w))=\emptyset$ for all $w\in{\cal V}(e)$, then ${\cal V}(w)\cap{\cal V}(\eta_\mu(e))=\emptyset$, which follows from ${\cal V}(\eta_\mu(e))\subseteq \bigcup_{w\in{\cal V}(e)}{\cal V}(\mu(w))$ which is a basic property of variables in a substitution.
  \item The axiom application law translates directly to the induction step of closure in \hyperref[def:cls]{\autoref*{sec:recap}.\ref*{def:cls}} for the object level formal system.
\end{itemize}
\qed\end{proof}

\begin{corollary}[G\"{o}del's completeness theorem]
A statement $\ang{\dv,\emptyset,A}$ of a formal system is a theorem iff it is true in every model.
\end{corollary}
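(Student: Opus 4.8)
The plan is to prove the two implications separately: left-to-right is just soundness, and right-to-left uses the ``self-model'' of \autoref{thm:godmod} as the witnessing model. First a word on reading the statement. A statement $\ang{D,\emptyset,A}$ that carries distinct-variable conditions should be counted \emph{true in a model} in the sense matching the axiom application law of \autoref{def:model}: that $\eta_\mu(A)$ is defined for every $\mu\in\vl$ with $\mu(\alpha)\fresh\mu(\beta)$ for all $\{\alpha,\beta\}\in D$; for $D=\dv$, after passing to the reduct, this says $\eta_\mu(A)$ is defined for every $\mu$ assigning pairwise-$\fresh$ values to the finitely many variables of $A$. (This qualification is essential: with the unconditional reading the statement would already fail, e.g.\ for \texttt{ax-17} in the \texttt{set.mm} model of \autoref{thm:setmodel}.) Granting this, the forward direction is exactly \autoref{thm:sound} with $H=\emptyset$: if $\ang{\dv,\emptyset,A}$ is a theorem then in any model $\eta_\mu(A)$ is defined whenever $\mu$ respects $\dv\cap\pow{{\cal V}(A)}$, which is precisely truth in that model.

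For the converse, suppose $\ang{\dv,\emptyset,A}$ is true in every model; I would apply this to the model of \autoref{thm:godmod} built from an object-level copy $\ang{\cn,\vr',\type,\Gamma'}$ of the system. Choose an injective type-preserving $g\colon{\cal V}(A)\to\vr'$ (possible because ${\cal V}(A)$ is finite and each typecode has infinitely many object variables), extend it type-preservingly but otherwise arbitrarily to all of $\vr$, and set $\mu(v)=\vh_{g(v)}$. Each $\vh_{g(v)}$ is a variable hypothesis, so $\ang{\emptyset,\emptyset,\vh_{g(v)}}$ is a theorem of the object system and $\vh_{g(v)}\in\uv_{\type(v)}$; hence $\mu\in\vl$. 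Since $g$ is injective on ${\cal V}(A)$, the expressions $\mu(\alpha)=\vh_{g(\alpha)}$ and $\mu(\beta)=\vh_{g(\beta)}$ have disjoint variable sets, so $\mu(\alpha)\fresh\mu(\beta)$ for distinct $\alpha,\beta\in{\cal V}(A)$. Truth in this model therefore makes $\eta_\mu(A)$ defined; but $\eta_\mu$ is simply the renaming substitution $v\mapsto g(v)$, so $\eta_\mu(A)$ is the renamed expression $g(A)$, and its lying in $\uv_{\type(A)}$ unwinds — by the definition of $\uv$ in \autoref{thm:godmod} — to the statement that $\ang{\dv',\emptyset,g(A)}$ is provable in the object-level system.

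It remains to pull this back to a proof of $\ang{\dv,\emptyset,A}$ in the original system, and the hypothesis $D=\dv$ is exactly what makes this step routine: when the distinct-variable set consists of all pairs (as in both $\dv$ and $\dv'$) the distinct-variable side condition in the closure construction is vacuous, so \emph{any} substitution may be applied to a derivation to obtain another derivation. In particular, since every axiom of $\Gamma'$ is a renaming of an axiom of $\Gamma$, the object-level closure of $\emptyset$ relative to $\dv'$ is the ``substitutive closure'' of $\Gamma$, and it is stable under arbitrary substitutions back into the meta level; applying the total substitution $\rho$ that inverts $g$ on $g({\cal V}(A))$ and is arbitrary elsewhere to a derivation of $g(A)$ then yields a derivation of $\rho(g(A))=A$, whence $\ang{\dv,\emptyset,A}$ is a theorem. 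The main obstacle is thus the careful verification of this transfer between the meta- and object-level systems; it is also worth flagging why $D=\dv$ cannot be weakened, namely that a coarser $D$ would force $\eta_\mu(A)$ defined only for valuations that may identify distinct variables of $A$, whereupon the pulled-back conclusion would be merely a substitution instance of $A$ rather than $A$ itself.
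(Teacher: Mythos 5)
Your overall route is the same as the paper's: the forward direction is soundness (\autoref{thm:sound} with $H=\emptyset$), and the converse instantiates the self-model of \autoref{thm:godmod}; your preliminary remark that ``true'' must be read relative to the distinct-variable conditions (i.e.\ as in the hypothesis of \autoref{thm:sound}) is a correct and useful clarification, and your \texttt{ax-17} example does show the unconditional reading cannot be meant. The difference is that the paper takes $\vr'\supseteq\vr$ and (implicitly) the valuation $\mu(v)=\vh_v$, so that $\eta_\mu(A)=A$ and membership in $\uv_{\type(A)}$ already \emph{is} derivability, whereas you rename through an injective $g$ and must then pull an object-level derivation of $g(A)$ back to the meta level. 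That pull-back is where your argument has a genuine gap.

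The claim that ``when the distinct-variable set consists of all pairs the distinct-variable side condition in the closure construction is vacuous, so any substitution may be applied to a derivation'' is false. With $D=\dv$, the side condition in the closure definition (\autoref{sec:recap}, item \ref{def:cls}) for an axiom pair $\{\alpha,\beta\}\in D'$ becomes ${\cal V}(\sigma(\vh_\alpha))\cap{\cal V}(\sigma(\vh_\beta))=\emptyset$, a real constraint, and theoremhood relative to $\dv$ is not stable under arbitrary substitutions: in \texttt{set.mm}, $\ang{\dv,\emptyset,{\vdash}(y=z\to\forall x\,y=z)}$ is a theorem (via \texttt{ax-17}), but its image under $y\mapsto x$ is not. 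Consequently your $\rho$, being ``arbitrary'' off $g({\cal V}(A))$, may identify dummy variables of the object-level derivation with each other or with variables of $g(A)$ and thereby violate DV conditions used inside that derivation. The repair is to take $\rho$ to be a variable-for-variable renaming that is injective on \emph{all} of the finitely many variables occurring in the derivation (sending $g(v)\mapsto v$ on ${\cal V}(A)$); injective renamings do preserve the disjointness conditions, so the derivation transfers. Note that this requires the meta-level system to have enough variables of each type to receive the dummy variables --- the same tacit assumption behind the paper's one-line ``derivable by definition,'' which the identity-valuation choice avoids only for $A$ itself, not for the dummies.
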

\begin{proof}
The forward direction is trivial by the definition of a model. For the converse, a statement true in the model of \autoref{thm:godmod}, with $\vr'\supseteq\vr$ extended to contain infinitely many variables of each type, is derivable by definition.
\qed\end{proof}

\subsubsection*{Acknowledgments.} The author wishes to thank Norman Megill and the anonymous reviewers for pointing out some minor and major omissions in early drafts of this work.

\end{document}